\definecolor{magenta}{rgb}{1,0,1}
\newcommand{\countingFuncGenus}{PG}
\newcommand{\setGenus}{\mathcal{PG}}
\newcommand{\countingFuncMult}{PM}
\newcommand{\setMult}{\mathcal{PM}}
\newcommand{\countingFuncFrob}{PF}
\newcommand{\setFrob}{\mathcal{PF}}
\newcommand{\noOnesFunc}{\bar{a}}
\newcommand{\noOnesSet}{\bar{\mathcal{A}}}
\newcommand{\setB}{\mathcal{B}}
\newcommand{\N}{\mathbb{N}}
\theoremstyle{definition}
\newtheorem{theorem}{Theorem}
\newtheorem{prop}[theorem]{Proposition}
\newtheorem{cor}[theorem]{Corollary}
\newtheorem{lemma}[theorem]{Lemma}
\newtheorem{remark}{Remark}
\title{On integer partitions corresponding to numerical semigroups}
\author{Hannah E. Burson}
\address{School of Mathematics\\ University of Minnesota, Twin Cities\\ Minneapolis, MN 55455} 
\email{hburson@umn.edu}
\author{Hayan Nam}
\address{Department of Mathematics\\ Duksung Women's University\\  Seoul, Republic of Korea} 
\email{hnam@duksung.ac.kr}
\author{Simone Sisneros-Thiry}
\address{Department of Mathematics\\ California State University, East Bay \\ Hayward, CA 94542} 
\email{simone.sisnerosthiry@csueastbay.edu}
\date{}
\keywords{partitions, numerical semigroups, Frobenius number, genus, multiplicity}
\subjclass{05A17, 11P81, 20M14}
\begin{document}
\normalem
\maketitle

\begin{abstract}
Numerical semigroups are cofinite additive submonoids of the natural numbers. Keith and Nath illustrated an injection from numerical semigroups to integer partitions  \cite{keith2011partitions}.  We explore this connection between partitions and numerical semigroups with a focus on classifying the partitions that appear in the image of the injection from numerical semigroups. In particular, we count the number of partitions that correspond to numerical semigroups in terms of genus, Frobenius number, and multiplicity, with some restrictions.
\end{abstract}

\section{Introduction}
Numerical semigroups arise as a way to study linear Diophantine equations \cite{RosalesGarciaSanchez}. Specifically, given nonnegative integers $a_1,a_2,\ldots, a_k$, for what integers $d$ are there non-negative integers $x_1,x_2,\ldots,x_k$ such that $a_1x_1+a_2x_2+\ldots+a_kx_k=d$? The set of such integers $d$ is called the \emph{numerical semigroup generated by} $\{a_1,a_2,\ldots, a_k\}$, denoted by $\langle a_1,a_2,\ldots, a_k\rangle$. The largest $d$ with no non-negative integer solution is called the Frobenius number of $\langle a_1,a_2,\ldots, a_k\rangle$ and the number of $d$'s with no non-negative integer solution is called the genus of $\langle a_1,a_2,\ldots, a_k\rangle$. 

Frobenius number and genus have been widely studied. Sylvester \cite{sylvester1884mathematical, sylvester1882subvariants} showed that, for coprime positive integers $a$ and $b$, the Frobenius number of the numerical semigroup $\langle a,b\rangle$ is $ab-a-b$ and the corresponding genus is $\frac{(a-1)(b-1)}{2}$. It is still an open question to find a formula for the Frobenius number and genus of a numerical semigroup generated by three positive integers, in general \cite{curtis1990formulas}. See, for example, \cite{branco2023frobenius,  hellus2021frobenius, nari2012genus, rosales2011frobenius, } to learn more about the Frobenius number and genus of certain numerical semigroups. Recently, many researchers have worked towards understanding the number of numerical semigroups with fixed Frobenius number or fixed genus. For example, see \cite{bras2009bounds, robles2022enumeration, rosales2022enumeration, singhal2022distribution}.

In this paper, we study the connection between numerical semigroups and integer partitions. One motivation for studying integer partitions is their connection to representation theory. For example, the irreducible polynomial representations of $\operatorname{GL}_n(\mathbb{C})$ are indexed by partitions of length at most $n$. Furthermore, partitions of $n$ index the conjugacy classes, and, thus, the number of non-equivalent irreducible complex representations, of $S_n$. For more details on these connections see \cite{FultonHarris} and \cite{Sagan}. These connections are particularly evident through the hook length formula, which states that the dimension of the irreducible representation of $S_n$ corresponding to $\lambda$ is given by $$\frac{n!}{\prod_{(i,j)\in \lambda} h_{(i,j)}(\lambda)},$$ where $h_{(i,j)}(\lambda)$ is the length of the hook corresponding to the cell $(i,j)$ in the Young diagram of $\lambda$ \cite{FrameRobinsonThrall}.  This formula motivates the study of \emph{hook lengths} of integer partitions and partitions avoiding hooks of a given length, which are called core partitions.

In 2011, to study partitions with a given set of hook lengths, Keith and Nath \cite{keith2011partitions} introduced a map from numerical semigroups to integer partitions. Later, Constantin, Houston-Edwards, and Kaplan used this map to answer further questions about integer partitions \cite{constantin2015numerical}. Additionally, they explored the function $S(N)$, which counts the number of partitions with largest hook length $N$ in the image of Keith and Nath's map. They showed that $$\lim_{N\to \infty} \frac{S(N)}{T(N)}=0,$$ where $T(N)$ counts all partitions with largest hook length $N$. Furthermore, they conjectured that if $S'(n)$ counts the number of partitions of size at most $n$ that are in the image of Keith and Nath's map, then $$\lim_{n\to \infty}\frac{S'(n)}{P(n)}=0,$$ where $P(n)$ counts all partitions of size at most $n$.

The main goal of this paper is to continue the study of $S'(n)$. We refine the function $S'(n)$ in three different ways by studying the number of such partitions with a given number of parts, largest hook length, or number of parts of size $1$. These three statistics are related to the numerical semigroup invariants genus,  Frobenius number, and multiplicity, respectively. For the numerical semigroup $S$, the \emph{genus} is the number of nonnegative integers in the complement of $S$; the \emph{Frobenius number} is the largest integer in the complement of $S$;  and the \emph{multiplicity} is the smallest positive integer that is in $S$.

The remainder of the paper is structured as follows. In Section \ref{sec:Preliminaries}, we collect definitions and preliminary results, and we explain Keith and Nath's map from numerical semigroups to partitions. In Sections \ref{sec:genus}, \ref{sec:Frobenius}, and \ref{sec:multiplicity}, we study the number of partitions of size $n$ corresponding to numerical semigroups of a given genus, Frobenius number, and multiplicity, respectively.

\section{Preliminaries}\label{sec:Preliminaries}
\subsection{Definitions}
Let $\mathbb{N}_0$ be the set of nonnegative integers. A subset $S$ of $\mathbb{N}_0$ is a {\it numerical set} if $S$ includes $0$, and $S$ has finite complement in $\mathbb{N}_0$. A numerical set $S$ is a {\it numerical semigroup} if, in addition, $S$ is closed under addition. Consider two examples of numerical sets $S_1 = \{0, 3, 5, 6, 8, \rightarrow \}$ and $S_2 = \{0, 4, 7, 9, \rightarrow \}$, where the notation $x,\rightarrow$ indicates that all positive integers greater than $x$ are included in the numerical set. In our example, $S_1$ is a numerical semigroup, but $S_2$ is not. For a complete introduction to numerical semigroups, see \cite{RosalesGarciaSanchez}.

Define the set $G=\mathbb{N}_0\setminus S$ as the set of gaps of the numerical set S. Then the genus of $S$, written as $g(S)$, is equal to $|G|$ (for example, $g(S_1)=4 $ and $g(S_2) =6 $) and the Frobenius number of $S$, written as $f(S)$, is the maximum element of $G$ ($f(S_1) =7 $ and $f(S_2) = 8$). Furthermore, the multiplicity of a numerical set $S$, $m(S)$ is the smallest nonzero element of $S$ ($m(S_1) = 3$ and $m(S_2) =4 $). Note, in the case when $S$ is clear, we often abbreviate the notation to $g$, $f$, and $m$, respectively.

A {\it partition} $\lambda$ of a positive integer $n$ is a sequence of non-increasing positive integers of finite length $(\lambda_1, \lambda_2, \ldots, \lambda_\ell)$ such that $\sum_{i=1}^\ell \lambda_i = n$. Each $\lambda_i$ is called a {\it part} of $\lambda$. In the case of repeated parts, we use frequency notation $(\lambda_1^{j_1}, \lambda_2^{j_2},\ldots, \lambda_k^{j_k})$, with $\lambda_1 > \lambda_2 > \cdots > \lambda_k$. For example, $(3, 2, 2, 1,1,1) = (3, 2^2, 1^3)$ is a partition of 10 into 6 parts. We define $\ell(\lambda)$ as the length, or number of parts of of the partition $\lambda$ (in our example, $\ell(\lambda) = 6)$, and $|\lambda|$ is the size of the partition (in our example, $|\lambda|= 10$). 

A partition can be represented with a \emph{Ferrers diagram}, which is a left-justified array of cells where the $i$th row from the top contains $\lambda_i$ cells. We will abuse notation and use $\lambda$ to refer to the partition, its multiset of parts, or its Ferrers diagram, depending on context. For each box in the diagram corresponding to an integer partition, the \emph{hook} is the collection of boxes which includes: the box itself, all boxes in the same row to the right of the box (the arm), and all boxes in the same column below the box (the leg). The \emph{hook length} is the number of boxes in the hook. We define $h_{(i,j)}(\lambda)$ to be the hook length of the cell $(i, j)$ in the Ferrers diagram of $\lambda$. 

\subsection{Relationship between Partitions and Numerical Sets}\label{sec:relationship}
In \cite{keith2011partitions}, Keith and Nath showed that every numerical set uniquely defines an integer partition. Consider taking a step ``east'' for each element of the numerical set and a step ``north'' for each integer not in the numerical set. The diagram constructed through this process corresponds to what is referred to as the profile of the partition. This process illustrates a bijection between numerical sets and partitions. 

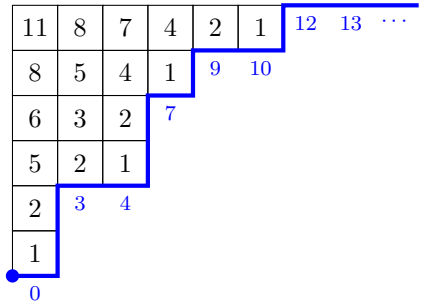
\begin{wrapfigure}{r}{0.36\linewidth}
 \begin{tikzpicture}[scale=0.6]
\begin{scope}[xshift = 16cm]
				\foreach \x in {0,1,2,3,4,5}
					{ \draw (\x,5) rectangle ++(1,1);}
					\foreach \x in {0,1,2,3}
					{ \draw (\x,4) rectangle ++(1,1);}
				\foreach \x in {0,1,2}
					{ \draw (\x,3) rectangle ++(1,1);}
				\foreach \x in {0,1,2}
					{ \draw (\x,2) rectangle ++(1,1);}
					\foreach \y in {0,1}
					{ \draw (0,\y) rectangle ++(1,1);}
				
				\begin{scope}
					\node at (0.5,5.5) {11};
					\node at (1.5,5.5) {8};				
					\node at (2.5,5.5) {7};
					\node at (3.5,5.5) {4};
					\node at (4.5,5.5) {2};
					\node at (5.5,5.5) {1};																
				
					\node at (0.5,4.5) {8};	
					\node at (1.5,4.5) {5};
					\node at (2.5,4.5) {4};
					\node at (3.5,4.5) {1};

					\node at (0.5,3.5) {6};
					\node at (1.5,3.5) {3};
					\node at (2.5,3.5) {2};
				
					\node at (0.5,2.5) {5};
					\node at (1.5,2.5) {2};
					\node at (2.5,2.5) {1};
					
					\node at (0.5,1.5) {2};
					
					\node at (0.5,0.5) {1};
					
				\end{scope}
				
				\begin{scope}[blue, ultra thick]
					\filldraw (0,0) circle [radius = 0.1];
					\draw[font = \footnotesize]
						(0,0) --
						node [below] {0} ++(1,0) --
						node [right] {} ++(0,1) -- 
						node [right] {} ++(0,1) --
						node [below] {3} ++(1,0) --
						node [below] {4} ++(1,0) --
						node [right] {} ++(0,1) --
						node [right] {} ++(0,1) --
						node [below] {7} ++(1,0)--
						node [right] {} ++(0,1) --
						node [below] {9} ++(1,0) --
						node [below] {10} ++(1,0) --
						node [right] {} ++(0,1)--
						node [below] {12} ++(1,0)--
						node [below] {13} ++(1,0)--
						node [below] {$ \cdots$} ++(1,0)  ;
				\end{scope}
			\end{scope}
\end{tikzpicture}
\caption{The partition $(6,4,3^2, 1^2)$ with the profile outlined and the hook lengths and corresponding numerical set labelled.}\label{partitionNumSetCorr}
 \end{wrapfigure}
 
 We can apply this bijection in either direction. As described above, we may begin with a numerical set and determine the partition that corresponds to it. In the other direction, given a partition $\lambda$, we let $S_\lambda$ be the corresponding numerical set.  Define $g_\lambda:=g(S_\lambda), f_\lambda:=f(S_\lambda)$, and $m_\lambda=m(S_\lambda)$. We note the relationship between these invariants of numerical semigroups with properties of the corresponding partition under Keith and Nath's bijection. The genus, $g_\lambda$, is equal to the number of parts of $\lambda$ (note: $g_\lambda = \ell(\lambda)$), and the multiplicity, $m_\lambda$, is one more than the number of parts of size $1$. The Frobenius number of $S_\lambda$, $f_\lambda$ is the largest hook length. Furthermore, the gap set, $G(S_\lambda)$ consists exactly of the hook lengths for the cells in the leftmost column of $\lambda$. As an example, consider the partition $\lambda= (6, 4, 3, 3, 1,1)$, pictured in Figure 1. We have $m_\lambda = 3 = 2+1$, $g_\lambda = 6$ and $f_\lambda = 11$.  We write ``partition with multiplicity $m$," interchangeably with ``partition that corresponds to a numerical set with multiplicity $m$." We will use ``partition with genus $g$" and ``partition with Frobenius number $f$" similarly.

If we restrict the domain to numerical semigroups, we have an injection from numerical semigroups to partitions. Let $\mathcal{P}(n)$ denote the set of partitions of $n$. We discuss the following subsets of the image of the injection from numerical semigroups to partitions, corresponding to the invariants listed above:

\begin{itemize}
    \item $\setGenus(n,g)$, the set of partitions of $n$ that correspond to numerical semigroups with genus $g$ (partitions of $n$ that correspond to numerical semigroups and have exactly $g$ parts)
    \item $\setFrob(n,f)$, the set of partitions of $n$ that correspond to numerical semigroups with Frobenius number $f$ (partitions of $n$ that correspond to numerical semigroups and have largest hook length $f$)
    \item $\setMult(n,m)$ the set of partitions of $n$ that correspond to numerical semigroups with multiplicity $m$ (partitions of $n$ that correspond to numerical semigroups and have exactly $m-1$ parts of size $1$)
\end{itemize}

Further, we define functions that enumerate each of the sets above: $p(n) = |\mathcal{P}(n)|$, $\countingFuncGenus(n,g)=|\setGenus(n,g)|$,  $\countingFuncFrob(n,f)=|\setFrob(n,f)|$, and $\countingFuncMult(n,m)=|\setMult(n,m)|$. 

\subsection{Useful facts about partitions and numerical semigroups.}
We state observations that follow directly from the definitions or the relationship described in Section \ref{sec:relationship}. 
\begin{itemize}
    \item The partitions that correspond to numerical semigroups have at least one part of size 1. This is because the only numerical semigroup that contains $1$ is the nonnnegative integers.
    \item If $F_\lambda < 2m_\lambda$, then $S_\lambda$ is a numerical semigroup. This is because, for $x, y \in S_\lambda$, $2m_\lambda \leq x+y$, so this condition implies that $S_\lambda$ is closed under addition.
    \item The gap between any two parts of a partition $\lambda$ that corresponds with a numerical semigroup is no larger than $m_\lambda$. If there were $m_\lambda$ consecutive integers in $S_\lambda$, every subsequent number must be in $S_\lambda$, which would force the string of $m_\lambda$ consecutive numbers to be larger than $F_\lambda$. 
    
\end{itemize}

\section{Partitions of $n$ that correspond to semigroups of Genus $g$}\label{sec:genus}

Recall that $\setGenus(n,g)$ is the set of partitions of $n$ with $g$ parts corresponding to a numerical semigroup with genus $g$. In this section, we collect identities related to $\countingFuncGenus(n,g)$, which is the number of partitions in $\setGenus(n,g)$. We first show that all partitions of $n$ with $\ge \frac{2}{3}n$ parts correspond to numerical semigroups. Then, we explore some cases where the partition has fewer than $\frac{2}{3}n$ parts, but the difference between the size of the partition and the number of parts is fixed.   

\begin{theorem}\label{GenusPartitionNumbers}
For any positive integer $n$ and $g\geq \frac{2}{3}n$, 
\[ PG(n,g) = p(n-g).\]
\end{theorem}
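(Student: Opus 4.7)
The plan is to reduce the claim to two essentially independent facts: (i) under the hypothesis $g \geq \frac{2}{3}n$, \emph{every} partition of $n$ into exactly $g$ parts automatically lies in the image of Keith--Nath's injection, and (ii) the number of partitions of $n$ with exactly $g$ parts is $p(n-g)$ whenever $g \geq n/2$. Together these give $\countingFuncGenus(n,g) = p(n-g)$.

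For step (i), I would invoke the second bullet of Section~\ref{sec:relationship}: if $f_\lambda < 2m_\lambda$, then $S_\lambda$ is a numerical semigroup. Let $\lambda$ be a partition of $n$ with exactly $g$ parts, and let $r$ denote the number of parts of $\lambda$ of size $\geq 2$; the remaining $g-r$ parts equal $1$. Since the $r$ larger parts contribute at least $2r$ to $|\lambda|$ and the small parts contribute $g-r$, we get $2r + (g-r) \leq n$, i.e., $r \leq n-g$. Making the other $r-1$ large parts as small as possible forces $\lambda_1 \leq n - (g-r) - 2(r-1) = n - g - r + 2$. Since $f_\lambda = h_{(1,1)}(\lambda) = \lambda_1 + g - 1$, we obtain $f_\lambda \leq n - r + 1$. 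Meanwhile $m_\lambda = (g-r) + 1$, so the inequality $f_\lambda \leq 2m_\lambda - 1$ reduces (after cancellation) to $r \leq 2g - n$. The key observation is that $g \geq \frac{2}{3}n$ is exactly equivalent to $n - g \leq 2g - n$, so the bound $r \leq n-g$ implies the bound $r \leq 2g-n$, and we conclude $f_\lambda < 2m_\lambda$. (The same chain of inequalities shows the number of $1$'s in $\lambda$ is at least $2g - n \geq 1$ for $n \geq 2$, consistent with the first bullet of Section~\ref{sec:relationship}; the cases $n \in \{1,2\}$ are trivial.)

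For step (ii), I would use the classical bijection that subtracts $1$ from each part of $\lambda$ and discards any parts that become $0$. This sends partitions of $n$ with exactly $g$ parts to partitions of $n-g$ with \emph{at most} $g$ parts. Any partition of $n-g$ has at most $n-g$ parts, so when $g \geq n-g$, i.e., $g \geq n/2$, the constraint ``at most $g$ parts'' is automatically satisfied. Since $\frac{2}{3}n \geq \frac{1}{2}n$, the hypothesis $g \geq \frac{2}{3}n$ certainly gives $g \geq n/2$, so the count is exactly $p(n-g)$.

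The main obstacle is really just locating the correct extremal configuration in step (i): one must optimize $\lambda_1$ (and hence $f_\lambda$) subject to a fixed number $r$ of parts $\geq 2$, and then minimize $m_\lambda$, before checking that the bound $r \leq n - g$ beats the required bound $r \leq 2g - n$ precisely at the threshold $g = \frac{2}{3}n$. The threshold is therefore tight: for $g$ slightly below $\frac{2}{3}n$, partitions such as $(n-g+1, 2^{n-g-1}, 1^{2g-n+1})$ can violate $f_\lambda < 2m_\lambda$, so a different method would be needed there (which motivates the later sections of the paper).
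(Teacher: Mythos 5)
Your proposal is correct and follows essentially the same route as the paper: both use the ``subtract $1$ from every part'' bijection between partitions of $n$ with $g$ parts and $\mathcal{P}(n-g)$ (valid since $\ell(\mu)\le n-g\le \tfrac12 g$), and both verify the semigroup condition via the bound $f_\lambda\le n-r+1$ coming from the extremal shape $\{\lambda_1,2^{r-1},1^{g-r}\}$ compared against $2m_\lambda=2(g-r+1)$. Your two-step organization (every length-$g$ partition of $n$ is automatically a semigroup partition, then count them) is a slightly cleaner packaging of the identical computation, so no substantive difference; only your closing tightness example $(n-g+1,2^{n-g-1},1^{2g-n+1})$ has the wrong size and length and would need adjusting, but it plays no role in the proof.
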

\begin{proof} 
For a partition $\lambda=\{\lambda_1,\lambda_2,\ldots,\lambda_k,1^{g-k}\} \in \setGenus(n,g)$, where $\lambda_1\ge \lambda_2\ge \ldots\ge \lambda_k\ge 2$, define $$\phi(\lambda):=\{\lambda_1-1,\lambda_2-1,\ldots,\lambda_k-1\}.$$ We will show that $$\phi:\setGenus(n,g)\rightarrow\mathcal{P}(n-g)$$ is a bijection.

Observe that $\phi$ is injective and $\phi(\setGenus(n,g))\subseteq\mathcal{P}(n-g)$. To show that $\phi$ is bijective, we need to show that $\phi^{-1}(\mu)$ corresponds to a numerical semigroup with genus $g$, for any $\mu \in \mathcal{P}(n-g)$. To that end, let $\mu\in \mathcal{P}(n-g)$ and let $\lambda=\phi^{-1}(\mu)$. Note that $\lambda$ is a partition of $n$ with $g$ different parts and $k$ parts of size $>1$, where $k=\ell(\mu)$.

Observe that, for any such partition $\lambda$ with $g$ parts and $k$ parts of size greater than $1$, $m_\lambda=g-k+1$ and $k\le n-g\le \frac{3}{2}g-g=\frac{1}{2}g$. Among all the possible Frobenius numbers of $\lambda$, we get the maximum when $\lambda=\{\lambda_1, 2^{k-1},1^{g-k}\}$. 
Thus, $$f_\lambda\le n-k+1.$$ Since $n\le \frac32 g$ and $k\le \frac12 g$, we can further say
$$
    f_\lambda\le \frac{3}{2}g-k+1\\
    \le\frac32 g-k+1+\left(\frac12 g-k\right)\\
    <2(g-k+1)=2m_\lambda.
$$

Since all numerical sets which have the property that the Frobenius number is less than twice its multiplicity are numerical semigroups, $\phi^{-1}(\mu)$ corresponds to a numerical semigroup and $\phi$ is a bijection.
\end{proof}

We can also find explicit values for $\countingFuncGenus(n,g)$ when $g\approx\frac{2}{3}n$, but $g<\frac{2}{3}n$. In this next theorem, we use the same map as the inverse map above, but we must remove a few exceptional cases where the partitions do not correspond to numerical semigroups. 
\begin{theorem}\label{thm:PNG_large}
 For $n\ge 3$, $$PG(3n-1,2n-1)=p(n)-1$$ and $$PG(3n-2,2n-2)=PG(3n-3,2n-3)=p(n)-2.$$  
\end{theorem}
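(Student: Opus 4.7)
The plan is to reuse the map $\phi$ from Theorem~\ref{GenusPartitionNumbers}. In each of the three cases, $n'-g=n$ (writing $n'$ for the partition size $3n-1$, $3n-2$, or $3n-3$), so $\phi$ remains an injection from $\setGenus(n',g)$ into $\mathcal{P}(n)$; the task is to identify the $\mu\in\mathcal{P}(n)$ for which $\phi^{-1}(\mu)$ \emph{fails} to correspond to a numerical semigroup. For $\mu$ with $k=\ell(\mu)$ parts and largest part $\mu_1$, the preimage $\lambda=\phi^{-1}(\mu)$ has $g$ parts, exactly $k$ of which exceed $1$, so $m_\lambda=g-k+1$ and $f_\lambda=\mu_1+g$ (the hook of the top-left cell). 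The sufficient condition $f_\lambda<2m_\lambda$ then reads $\mu_1<g-2k+2$, and combined with the automatic bound $\mu_1\le n-k+1$ a violation forces $k\in\{n,n-1,n-2\}$, cutting the list of potential ``bad'' $\mu$ down to a handful.

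Enumerating these candidates, Case 1 ($g=2n-1$) admits only $\mu=(1^n)$; Case 2 ($g=2n-2$) admits $\mu=(1^n)$ and $\mu=(2,1^{n-2})$; and Case 3 ($g=2n-3$) admits $\mu=(1^n)$, $\mu=(2,1^{n-2})$, and $\mu=(3,1^{n-3})$. For each candidate I would write $\lambda=\phi^{-1}(\mu)$ explicitly and read off $G(S_\lambda)$ from the first-column hook lengths of $\lambda$. In every case the resulting $S_\lambda$ has the shape $\{0\}\cup\{\text{one or two isolated values}\}\cup\{N,N+1,\ldots\}$, so closure under addition reduces to checking just a few small sums.

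In Cases 1 and 2 every candidate fails because $m_\lambda+m_\lambda$ lies in the gap set: for example in Case 1, $\lambda=(2^n,1^{n-1})$ gives $S_\lambda=\{0,n\}\cup\{2n+1,2n+2,\ldots\}$ and $n+n=2n$ is a gap. Thus the exception counts are $1$ and $2$, yielding the first two formulas. In Case 3 the same reasoning handles $(1^n)$ and $(3,1^{n-3})$, but $\mu=(2,1^{n-2})$ is the crux: its preimage $\lambda=(3,2^{n-2},1^{n-2})$ has $m_\lambda=n-1$, $f_\lambda=2n-1$, and the first-column computation gives $S_\lambda=\{0,n-1,2n-2\}\cup\{2n,2n+1,\ldots\}$. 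The main obstacle is that the shortcut $f_\lambda<2m_\lambda$ is unavailable here, so the closure check must be carried out by hand; the three nontrivial sums $(n-1)+(n-1)=2n-2$, $(n-1)+(2n-2)=3n-3$, and $(2n-2)+(2n-2)=4n-4$ all lie in $S_\lambda$ for $n\ge 3$ (the hypothesis is used exactly to guarantee $3n-3\ge 2n$), so this $\lambda$ \emph{does} correspond to a semigroup. Hence Case 3 contributes $2$ exceptions rather than $3$, completing the third formula.
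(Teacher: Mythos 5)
Your proposal is correct and follows essentially the same route as the paper: the same add-one/strip-ones bijection with $\mathcal{P}(n)$, reduction to finitely many candidates via the sufficient condition $f_\lambda<2m_\lambda$, and an explicit closure check of each candidate (your verdicts on which preimages fail — one for $g=2n-1$, two each for $g=2n-2$ and $g=2n-3$, with $(3,2^{n-2},1^{n-2})$ surviving — all match the paper's table). The only difference is cosmetic: you work with the exact value $f_\lambda=\mu_1+g$ rather than the paper's upper bound, which lets you discard $(2,2,1^{n-4})$ without tabulating it.
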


\begin{proof}
For $n\ge 3$ and each $i=1,2,3$, define the maps
\[
\phi_1:\mathcal{P}(n)\mapsto\setGenus(3n-1,2n-1)\cup \{\{2^n,1^{n-1}\}\}
\]
\[
\phi_2:\mathcal{P}(n)\mapsto\setGenus(3n-2,2n-2)\cup \{\{2^n,1^{n-2}\},\{3,2^{n-2},1^{n-1}\}\}
\]
\[
\phi_3:\mathcal{P}(n)\mapsto\setGenus(3n-3,2n-3)\cup \{\{2^n, 1^{n-3}\}, \{4,2^{n-3},1^{n-1}\}\}
\]
by
\[
\{\lambda_1,\lambda_2,\ldots,\lambda_\ell\}\mapsto \{\lambda_1+1,\lambda_2+1,\ldots, \lambda_\ell+1,1^{2n-i-\ell}\}.
\]

Note that, for each $i=1,2,3$ and a partition $\lambda$ in the domain of $\phi_i$, the size of $\mu_i:=\phi_i(\lambda)$ is $|\lambda|+\ell+(2n-i-\ell)=3n-i$, the length of $\mu_i$ is $\ell+(2n-i-\ell)=2n-i$, and each of the maps $\phi_i$ and $\phi_i^{-1}$ are injective on their domains. To complete the proof, we need to show that $\mu_i$ corresponds to a numerical semigroup with genus $2n-i$. 

In order to show that all of the partitions in the image of the three maps correspond to numerical semigroups, we first consider a partition $\lambda$ of size $n$ such that $\ell:=\ell(\lambda)<n-2$. For each $i=1,2,3$, let $\mu_i=\phi_i(\lambda)$. Then, for each possible $i$, we have $m_{\mu_i}= 2n-i+1-\ell$ and 
\[f_{\mu_i}\le n+1+(2n-i-\ell)=3n+1-i-\ell<4n-2\ell-4\le 2m_{\mu_i}\] 
since $n-\ell>2$. When $n\ge 4$, there are four partitions of $n$ with at least $n-2$ parts, which are $\{1^n\}$,$\{2,1^{n-2}\}$, $\{3,1^{n-3}\}$ and $\{2,2,1^{n-4}\}$. In Table \ref{tab:genusPartitionsManyParts}, we examine which numerical sets correspond to the images of those partitions. 
We observe that $\{0,n,2n+1,\rightarrow\}$, $\{0,n-1,2n,\rightarrow\}$, $\{0,n,2n-1,2n+1,\rightarrow\}$, $\{0,n-2,2n-1,\rightarrow\}$, and $\{0,n,2n-2,2n-1,n+1,\rightarrow\}$ are not numerical semigroups since twice the multiplicity of each set is not in the set. 
When $n=3$, there are only three partitions of $n$ with at least $n-2$ parts and we can similarly identify the exceptions. 
\begin{table}[h]
    \centering
$$
\begin{array}{ccc}
\toprule
    \lambda & \phi_1(\lambda) & S_{\phi_1(\lambda)}  \\ \midrule
    \{1^n\} & \{2^n,1^{n-1}\} &\{0,n,2n+1\rightarrow\}^* \\
    \{2,1^{n-2}\} & \{3,2^{n-2},1^{n}\} & \{0,n+1,2n,2n+2\rightarrow\} \\
    \{3,1^{n-3}\} &\{4,2^{n-3},1^{n+1}\} &\{0,n+2,2n,2n+1,2n+3\rightarrow\} \\
    \{2,2,1^{n-4}\} & \{3,3,2^{n-4},1^{n+1}\} & \{0,n+2,2n-1,2n+2\rightarrow\} 
    \\ \bottomrule
\toprule
    \lambda & \phi_2(\lambda) & S_{\phi_2(\lambda)}  \\ \midrule
    \{1^n\} & \{2^n,1^{n-2}\} &\{0,n-1,2n\rightarrow\}^* \\
    \{2,1^{n-2}\} & \{3,2^{n-2},1^{n-1}\} & \{0,n,2n-1,2n+1\rightarrow\}^* \\
    \{3,1^{n-3}\} &\{4,2^{n-3},1^n\} &\{0,n+1,2n-1,2n,2n+2\rightarrow\} \\
    \{2,2,1^{n-4}\} & \{3,3,2^{n-4},1^n\} & \{0,n+1,2n-2,2n+1\rightarrow\} 
    \\ \bottomrule \toprule
    \lambda& \phi_3(\lambda)& S_{\phi_3(\lambda)} \\ \midrule
    \{1^n\}& \{2^n,1^{n-3}\} & \{0,n-2,2n-1\rightarrow\}^*\\
    \{2,1^{n-2}\} & \{3,2^{n-2},1^{n-2}\} & \{0,n-1,2n-2,2n\rightarrow\}\\
    \{3, 1^{n-3}\} & \{4,2^{n-3},1^{n-1}\} & \{0,n,2n-2,2n-1,2n+1\rightarrow\}^* \\
    \{2,2,1^{n-4}\} & \{3,3,2^{n-4},1^{n-1}\} & \{0,n,2n-3,2n\rightarrow\}\\ \bottomrule
\end{array}
$$
    \caption{Partitions of $n$ into at least $n-2$ parts, with their images under the maps $\phi_1,$ $\phi_2,$ and $\phi_3$. Exceptional cases, where the resulting partition does not correspond to a numerical semigroup are marked with a $^*$ in the third column.}
    \label{tab:genusPartitionsManyParts}
\end{table}
\end{proof}

We continue by connecting two special cases of partitions counted by $\countingFuncGenus(n,g)$ with a fixed value of $n-g$ to other partition-theoretic functions. 
\begin{theorem}\label{GenusBoxedPartitions}
 Let $b(n)$ denote the number of partitions of $n+2$ into at most $n$ parts, each no larger than $n$. Then, for $n\ge 4$ and $k=0,1,2$,
 $$\countingFuncGenus(3n+2-k,2n-k)=b(n)-(k+2).$$
\end{theorem}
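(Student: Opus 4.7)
The plan is to follow the same template as the proof of Theorem~\ref{thm:PNG_large}, namely, build a map from $\mathcal{B}(n)$ to $\setGenus(3n+2-k,2n-k)$ enlarged by a finite set of ``exceptions,'' and show it is a bijection. Define
\[
\phi_k: \mathcal{B}(n)\longrightarrow \setGenus(3n+2-k,2n-k)\cup E_k,
\qquad
\phi_k(\mu_1,\ldots,\mu_\ell)=\bigl(\mu_1+1,\ldots,\mu_\ell+1,1^{2n-k-\ell}\bigr),
\]
where $E_k$ is an explicit $(k+2)$-element set of partitions that will turn out not to correspond to numerical semigroups. The number-of-cells calculation gives $|\phi_k(\mu)|=(n+2)+(2n-k)=3n+2-k$ and $\ell(\phi_k(\mu))=2n-k$, and the condition $2n-k-\ell\geq 0$ holds because $\ell\leq n$ and $k\leq 2\leq n$. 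Injectivity is immediate: strip off the parts equal to $1$ and subtract $1$ from the remaining parts. Once the bijection is established, the equality $b(n)=\countingFuncGenus(3n+2-k,2n-k)+(k+2)$ is the desired conclusion.

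For a ``generic'' $\mu\in\mathcal{B}(n)$, I would prove $\phi_k(\mu)\in\setGenus$ using the sufficient condition $f_\lambda<2m_\lambda$ from Section~\ref{sec:Preliminaries}. Setting $\lambda=\phi_k(\mu)$ and counting first-column hooks gives $m_\lambda=2n-k-\ell+1$ and $f_\lambda=\mu_1+2n-k$, so the sufficient condition becomes $\mu_1<2n-k-2\ell+2$. Since $\mu_1\leq n$ by the box constraint, this holds whenever $\ell\leq(n+1-k)/2$, handling all ``small-$\ell$'' partitions of $\mathcal{B}(n)$ at once. Only the finitely many $\mu\in\mathcal{B}(n)$ with $\ell$ exceeding this threshold require direct inspection. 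Because $|\mu|=n+2$, such $\mu$ are extremely constrained: most parts must be $1$ or $2$, and one can enumerate them in a short table analogous to Table~\ref{tab:genusPartitionsManyParts}, computing the gap set and checking closure under addition for each. The exceptional cases are exactly those $\mu$ for which $2m_\lambda$ equals a first-column hook length $\mu_i+(2n-k-i+1)$; this forces a specific pair $(\mu_1,\ell)$, and the compatible partitions are an explicit family such as $(4,1^{n-2}),\,(2,2,1^{n-2})$ for $k=0$, with analogous partitions arising for $k=1,2$. Verifying that the total count across all relevant values of $\ell$ is exactly $k+2$ is a bookkeeping task parallel to the previous theorem.

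The hardest step will be confirming surjectivity onto $\setGenus(3n+2-k,2n-k)\cup E_k$: given $\lambda\in\setGenus(3n+2-k,2n-k)$, one must show that the naive preimage $\phi_k^{-1}(\lambda)$ actually lies in $\mathcal{B}(n)$, i.e.\ that $\lambda_1\leq n+1$ and that $\lambda$ has at most $n$ parts greater than~$1$. This amounts to showing that every ``large-first-part'' or ``few-ones'' partition $\lambda$ of size $3n+2-k$ with $2n-k$ parts fails the numerical-semigroup property. I would carry this out by a case analysis on $\lambda_1$ and on the number of parts equal to~$1$, using the observation from Section~\ref{sec:Preliminaries} that a numerical semigroup partition cannot have large gaps between its parts when the multiplicity is small, together with the same ``$2m_\lambda$ lies in the hook set'' argument used for the exceptions. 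Combining injectivity, the image containment, the enumeration of the $(k+2)$ exceptions, and this surjectivity argument yields the claimed identity.
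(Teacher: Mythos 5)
Your map is the paper's map run in the opposite direction (the paper subtracts $1$ from each part of $\lambda\in\setGenus(3n+2-k,2n-k)$ and lands in $\mathcal{B}(n)$; you add $1$ and pad with ones), and your handling of well-definedness, injectivity, and the enumeration of the $(k+2)$ exceptional $\mu$'s via a threshold on $\ell(\mu)$ parallels the paper's case analysis on $m_{\hat\pi}$. The genuine problem sits exactly in the step you flag as hardest. Your surjectivity argument requires that every $\lambda$ of size $3n+2-k$ with $2n-k$ parts and $\lambda_1>n+1$ fail to correspond to a numerical semigroup, and that is false. The only such partitions are $\{n+3,1^{2n-k-1}\}$ and $\{n+2,2,1^{2n-k-2}\}$; for the first, $f_\lambda=3n+2-k$ and $2m_\lambda=4n-2k$, so $f_\lambda<2m_\lambda$ whenever $n>k+2$, and for the second, $f_\lambda=3n+1-k$ and $2m_\lambda=4n-2k-2$, so $f_\lambda<2m_\lambda$ whenever $n>k+3$. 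By the criterion from Section 2, both correspond to numerical semigroups for all but the smallest $n$ in the stated range. Concretely, for $n=4$, $k=0$, the partition $\{7,1^7\}$ lies in $\setGenus(14,8)$ (here $m=8$, $f=14<16$), but its would-be preimage $\{6\}$ violates the box condition, so it has no preimage in $\mathcal{B}(4)$. Your $\phi_k$ therefore misses two elements of $\setGenus(3n+2-k,2n-k)$, the proposed bijection cannot exist, and carrying the count through honestly yields $\countingFuncGenus(3n+2-k,2n-k)=b(n)-k$ rather than $b(n)-(k+2)$.

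You should not read this only as a defect of your write-up: the paper's proof deletes exactly these two partitions from the domain of its map in order to force $\lambda_1\le n+1$, but never checks whether they belong to $\setGenus(3n+2-k,2n-k)$, so it drops the same two elements from the count. A direct computation gives $\countingFuncGenus(14,8)=7$ while $b(4)-2=5$, the difference being accounted for by $\{7,1^7\}$ and $\{6,2,1^6\}$; similarly $\countingFuncGenus(16,9)=10$ while $b(5)-3=8$. So any correct completion of your surjectivity step will surface a discrepancy with the stated identity rather than confirm it; the fix is to add these two partitions back into the count (or restrict to the boundary values of $n$ where they genuinely fail to be semigroups).
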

\begin{proof}
Let $\mathcal{B}(n)$ be the set of partitions of $n+2$ into at most $n$ parts, each no larger than $n$. Additionally, let $k\in \{0,1,2\}$. For a partition $\lambda= \{\lambda_1, \lambda_2, \ldots, \lambda_{2n-k}\}\in \setGenus(3n+2-k,2n-k)$, define $$\phi:\setGenus(3n+2-k,2n-k)\setminus\{\{n+2,2,1^{2n-k-2}\},\{n+3,1^{2n-k-1}\}\} \to \mathcal{B}(n) $$ by $$\phi(\lambda):=\{\lambda_1-1,\lambda_2-1,\ldots,\lambda_{2n-k}-1\},$$ omitting any resulting parts of size $0$. We will show that $\phi$ is an injection and that $$\left|\setB(n)\setminus\phi(\setGenus(3n+2-k,2n-k)\setminus\{\{n+2,2,1^{2n-k-2}\},\{n+3,1^{2n-k-1}\}\})\right|=k+2.$$ 
Let $\lambda\in \setGenus(3n+2-k,2n-k)\setminus\{\{n+2,2,1^{2n-k-2}\},\{n+3,1^{2n-k-1}\}\}$ and let $\mu=\phi(\lambda)$. Since $|\lambda|=3n+2-k$ and $\ell(\lambda)=2n-k$, $\lambda_1\le n+1$, so $\mu_1\le n$. To show that $\mu\in \setB(n)$, it remains to show that $\ell(\mu)\le n$.  For the sake of contradiction, assume that $\mu$ has more than $n$ parts. Then, $\lambda$ has more than $n$ parts of size at least $2$. Let $S_{\lambda}$ be the numerical set corresponding to the partition $\lambda$. Note that, because $|\lambda|=3n+2-k$, $\ell(\lambda)=2n-k$, and $\lambda$ has at least $n+1$ parts of size at least $2$, $m_\lambda\le n-k$ and the third column of $\lambda$ consists of no more than $1$ cell. Then, $\lambda$ is either $\{2^{n+1}, 1^{n-2-k}\}$ or $\{3, 2^n, 1^{n-k-1}\}$. Either case gives a contradiction since they do not correspond to a numerical semigroup due to the fact that twice of multiplicity is not in the corresponding numerical set. Thus, $\mu$ has no more than $n$ parts, completing the proof that $\mu\in \setB(n)$. Furthermore, $\phi$ is injective because given a partition $\mu$ in the image of $\phi$, the only possible partition it could be the image of is $\{\mu_1+1,\mu_2+1,\ldots \mu_\ell+1,1^{2n-k-\ell}\}$.

Let $\mathcal{E}(n,k)=\setB(n)\setminus \phi(\setGenus(3n+2-k,2n-k)\setminus\{\{n+2,2,1^{2n-k-2}\},\{n+3,1^{2n-k-1}\}\})$. Next, we examine the size of $\mathcal{E}(n,k)$. Let $\pi\in \setB(n)$ and define $\hat{\pi}=\{\pi_1+1,\pi_2+1,\ldots,\pi_\ell+1,1^{2n-k-\ell}\}$. Note that 
\[
    m_{\hat\pi}=2n-k-\ell+1 \label{mhatpi},\quad \text{or, equivalently,}\quad  \ell=2n-k-m_{\hat\pi}+1.
\]
Additionally, since $|\pi|=n+2$, $\pi_1+\ell\le n+3$, so 
\begin{align*}
    \pi_1\le\ & n+3-\ell \label{pi1bound}\\ 
    =\ & 2-n+k+m_{\hat\pi}. \nonumber
\end{align*}
Finally, $m_{\hat\pi}>2n-k-n=n-k$ since $\ell\le n$. 

We proceed with cases based on the size of $m_{\hat\pi}$. The cases show that all partitions $\pi$ in $\mathcal{B}(n)$ map to partitions $\hat\pi$ in $\setGenus(3n+2-k, 2n-k)$, except for the exceptional partitions in the table below.

$$\begin{array}{cl}
 \toprule
 k & \mathcal{E}(n,k) \\ \midrule
 0 & \{4, 1^{n-2}\}, \{2, 2, 1^{n-2}\} \\ 

1 & \{5, 1^{n-3}\}, \{3, 2, 1^{n-3}\}, \{3,1^{n-1}\} \\ 
 
2 & \{6, 1^{n-4}\}, \{4, 2, 1^{n-4}\}, \{2, 2,2, 1^{n-4}\}, \{3, 1^{n-1}\} \\ 
\bottomrule
\end{array}$$

\begin{enumerate}
    \item[Case 1.] If $m_{\hat\pi}>n+2$, then $f_{\hat\pi}=2n-k+\pi_1\le n+2+m_{\hat\pi}<2m_{\hat\pi}$, so $S_{\hat\pi}$ is a numerical semigroup. 
    \item[Case 2.] If $m_{\hat\pi}=n+2$, then either $\pi_1<k+4$ or $\pi=\{k+4,1^{n-k-2}\}$. If $\pi_1<k+4$, then $f_{\hat\pi}<2n+4=2m_{\hat\pi}$, so $S_{\hat\pi}$ is a numerical semigroup. If $\pi=\{k+4,1^{n-k-2}\}$, then $\hat\pi=\{k+5,2^{n-k-2},1^{n+1}\}$, which does not correspond to a numerical semigroup, so $\{k+4,1^{n-k-2}\}\in  \mathcal{E}(n,k)$. 
    \item[Case 3.] If $m_{\hat\pi}=n+1$, then $\pi_1\le k+3$. If $\pi_1<k+2$, then $f_{\hat\pi}<2n+2=2m$, so $S_{\hat\pi}$ is a numerical semigroup. Otherwise, $\pi=\{k+2,2,1^{n-k-2}\}$ or $\pi=\{k+3,1^{n-k-1}\}$. In the first case, $S_{\hat\pi}$ is not a numerical semigroup, so $\{2+k,2,1^{n-k-2}\}\in \mathcal{E}(n,k)$. In the latter case, $f_{\hat\pi}=2n+3$ and $2m=2n+2\in S_{\hat\pi}$. Since $m+1=n+2\not\in S_{\hat\pi}$, $S_{\hat\pi}$ is a numerical semigroup. 
    \item [Case 4] If $m_{\hat\pi}=n$ or $m_{\hat\pi}=n-1$, the possible options for $\pi$ are collected in the following table.
$$   \begin{array}{cl}
\toprule
k & \pi\\
\midrule
1 & \{2,2,1^{n-2}\}, \{3,1^{n-1}\}\\
2 & \{3,2,1^{n-3}\}, \{4,1^{n-2}\}, \{2,2,2,1^{n-4}\}, \{2,2,1^{n-2}\}, \{3,1^{n-1}\}\\
\bottomrule
\end{array}
$$ 
    Each partition can be analyzed individually to determine the exceptional cases. 

\end{enumerate}

Thus, we see that $|\mathcal{E}(n,k)|=k+2$ for $k=0,1,2$, as desired. 
\end{proof}

\begin{cor}
For $n\ge 4$ and $k=0,1,2$, $$\countingFuncGenus(3n+2-k,2n-k)=p(n+2)-6-k.$$
\end{cor}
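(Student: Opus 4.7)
The plan is to combine Theorem \ref{GenusBoxedPartitions} with a direct enumeration of the partitions of $n+2$ that are excluded from $\setB(n)$. By Theorem \ref{GenusBoxedPartitions} it suffices to show that $b(n)=p(n+2)-4$ for all $n\ge 4$, since then
\[
\countingFuncGenus(3n+2-k,2n-k)=b(n)-(k+2)=p(n+2)-4-(k+2)=p(n+2)-6-k.
\]

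To establish $b(n)=p(n+2)-4$, I would count the partitions of $n+2$ that fail to lie in $\setB(n)$, i.e.\ those with more than $n$ parts or with a part strictly greater than $n$. For the first type, a partition of $n+2$ with at least $n+1$ parts must consist only of $1$'s and $2$'s; the only possibilities are $\{1^{n+2}\}$ and $\{2,1^n\}$, giving two partitions. For the second type, a partition of $n+2$ whose largest part exceeds $n$ must have largest part equal to $n+1$ or $n+2$; the only possibilities are $\{n+2\}$ and $\{n+1,1\}$, giving two more partitions.

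The main (and only) thing to check is that these two classes are disjoint, so that inclusion–exclusion gives exactly $4$ exceptional partitions. A partition with more than $n$ parts summing to $n+2$ has largest part at most $2$, while a partition with a part exceeding $n$ has a part of size at least $n+1\ge 5$; since $n\ge 4$, the two conditions cannot hold simultaneously. Therefore $p(n+2)-b(n)=4$, which upon substitution into Theorem \ref{GenusBoxedPartitions} yields the claimed formula. No step here looks like a real obstacle: the bijective work has already been done in Theorem \ref{GenusBoxedPartitions}, and all that remains is this short counting argument.
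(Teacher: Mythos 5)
Your proposal is correct and follows essentially the same route as the paper: both reduce the corollary to showing $b(n)=p(n+2)-4$ by identifying the two partitions of $n+2$ with more than $n$ parts and the two with a part exceeding $n$, then invoke Theorem \ref{GenusBoxedPartitions}. Your explicit disjointness check is a small additional verification the paper leaves implicit, but the argument is the same.
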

\begin{proof}
    Note that there are exactly $2$ partitions of $n+2$ with more than $n$ parts ($\{2,1^{n}\}$ and $\{1^{n+2}\}$) and $2$ partitions of $n+2$ with a largest part of size $>n$ ($\{n+2\}$ and $\{n+1,1\}$). Therefore, $b(n)=p(n+2)-4.$
\end{proof}

\begin{remark}\label{remark:genus}
By following a similar structure to the proof of Theorem \ref{GenusBoxedPartitions}, we could find $p(n)-\countingFuncGenus(3n-i,2n-i)$ and $b(n)-\countingFuncGenus(3n+2-i,2n-i)$ for any specific $i$. However, it is an open problem to find functions $h_1(i)$ and $h_2(i)$ such that $p(n)-\countingFuncGenus(3n-i,2n-i)=h_1(i)$ and $b(n)-\countingFuncGenus(3n+2-i,2n-i)=h_2(i)$ for all possible values of $i$. 
\end{remark}

\begin{theorem}\label{genusRecurrence}
Let $\bar{p}(n)$ be the number of partitions of $n$ with no parts of size $1$. For $j\in \N$ and $n\ge j+5$, 
$$\countingFuncGenus(3n+2-j,2n-j)-\countingFuncGenus(3(n-1)+2-j,2(n-1)-j)=\bar{p}(n+2).$$
\end{theorem}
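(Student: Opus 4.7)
The plan is to mirror the bijective framework of Theorem~\ref{GenusBoxedPartitions}. Define $\phi_n \colon \setGenus(3n+2-j, 2n-j) \to \mathcal{P}(n+2)$ by sending $\lambda$ to the partition obtained by subtracting $1$ from each part and discarding any resulting $0$'s; its inverse is $\phi_n^{-1}(\mu) = \{\mu_1+1, \ldots, \mu_t+1, 1^{2n-j-t}\}$, where $t = \ell(\mu)$. The map $\phi_n$ is injective, so the recurrence will follow from two claims: (A) every $\mu \in \mathcal{P}(n+2)$ with no parts of size $1$ lies in the image of $\phi_n$; and (B) for $\mu \in \mathcal{P}(n+2)$ with at least one part of size $1$, setting $\tilde\mu = \mu \setminus \{1\} \in \mathcal{P}(n+1)$, we have $\mu \in \phi_n(\setGenus(3n+2-j, 2n-j))$ if and only if $\tilde\mu \in \phi_{n-1}(\setGenus(3(n-1)+2-j, 2(n-1)-j))$. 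Together (A) and (B) identify the image of $\phi_n$ as the disjoint union of $\bar{\mathcal{P}}(n+2)$ and $\{\tilde\mu \cup \{1\} : \tilde\mu \in \phi_{n-1}(\setGenus(3(n-1)+2-j, 2(n-1)-j))\}$, yielding
\[
PG(3n+2-j, 2n-j) = \bar p(n+2) + PG(3(n-1)+2-j, 2(n-1)-j),
\]
which is the desired recurrence.

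For Claim~A, let $\mu \in \mathcal{P}(n+2)$ have $t$ parts, all $\ge 2$, so $\mu_1 \le n+4-2t$. The preimage $\lambda = \phi_n^{-1}(\mu)$ has $m_\lambda = 2n-j-t+1$ and $f_\lambda = \mu_1+2n-j$, hence
\[
f_\lambda - 2m_\lambda \le (n+4-2t) + 2t - 2n + j - 2 = j+2-n \le -3
\]
for $n \ge j+5$. By the sufficient condition in Section~\ref{sec:Preliminaries}, $\lambda$ corresponds to a numerical semigroup.

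For Claim~B, write $\lambda = \phi_n^{-1}(\mu)$ and $\lambda' = \phi_{n-1}^{-1}(\tilde\mu)$. A direct computation gives $m_\lambda = m_{\lambda'}+1$ and $f_\lambda = f_{\lambda'}+2$, so $\lambda$ and $\lambda'$ share the common deficit $f - 2m$; comparing first-column hook lengths further shows that $m_\lambda+1$ is always a gap of $\lambda$ and that
\[
S_\lambda \setminus \{0, m_\lambda\} = \bigl(S_{\lambda'} \setminus \{0, m_{\lambda'}\}\bigr) + 2.
\]
One then splits by the sign of $f-2m$: if negative, both correspond to semigroups by the sufficient condition; if zero, then $2m = f$ is a gap and neither does. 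For the positive-deficit case, the inequalities $\mu_1+2t \ge 2n-j+3$, $|\mu|=n+2$, and $n \ge j+5$ together force $\mu_{t-1}=1$ (so $\tilde\mu$ still contains a part of size $1$), and a case analysis of the ``big gap'' set $B = \{\tilde\mu_i + 2n-j-1-i\}_{i=1}^{t-1}$ relative to $m_{\lambda'}$ shows that the closure conditions for $S_\lambda$ and $S_{\lambda'}$ are equivalent. The principal obstacle is this positive-deficit sub-case: although the closure conditions for $S_\lambda$ and $S_{\lambda'}$ formally differ by constant shifts of $1$ and $2$, proving their equivalence demands a careful combinatorial analysis of which elements of $B$ can simultaneously obstruct closure for both numerical sets.
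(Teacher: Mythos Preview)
Your framework is essentially the paper's own bijection, just parametrized through the injection $\phi_n$ into $\mathcal{P}(n+2)$ rather than as a single bijection onto $\setGenus(3(n-1)+2-j,2(n-1)-j)\cup\bar{\mathcal{P}}(n+2)$. Claim~A is exactly the paper's Claim~3 and your proof of it is correct. Claim~B is the paper's Claims~2 and~4 combined: your $\lambda'=\phi_{n-1}^{-1}(\tilde\mu)$ is precisely $\lambda\setminus\{2,1\}$, and the heart of the matter is showing that $S_\lambda$ is a numerical semigroup if and only if $S_{\lambda'}$ is.

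The genuine gap is that you leave the positive-deficit sub-case of Claim~B unresolved, explicitly calling it ``the principal obstacle'' and deferring to an unperformed case analysis of the set $B$. The closure conditions you write down for $S_\lambda$ and $S_{\lambda'}$ do \emph{not} match under the shift by $2$ (for instance, $m_\lambda+(z+2)\in S_\lambda$ corresponds to asking whether $m_{\lambda'}+z+1\in B$, not $m_{\lambda'}+z\in B$), so the equivalence is not automatic and you have not supplied the argument.

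What you are missing is the single estimate the paper uses to cut through this. Writing $k$ for the second smallest nonzero element of $S_\lambda$, one bounds the size of the partition outside the main hook by the number of parts $\ge 2$ plus the number of parts $\ge 3$ (minus constants), which after simplification gives $F_\lambda < m_\lambda + k$; the same bound holds for $\lambda'$. This bound forces every sum of two nonzero elements of $S_\lambda$ other than $2m_\lambda$ to exceed $F_\lambda$, so $S_\lambda$ is a numerical semigroup \emph{if and only if} $2m_\lambda\in S_\lambda$, and likewise for $\lambda'$. At that point your own shift relation $S_\lambda\setminus\{0,m_\lambda\}=(S_{\lambda'}\setminus\{0,m_{\lambda'}\})+2$ together with $2m_\lambda=2m_{\lambda'}+2$ immediately gives $2m_\lambda\in S_\lambda\iff 2m_{\lambda'}\in S_{\lambda'}$, finishing Claim~B without any case analysis. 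You already have the shift relation in hand; the missing ingredient is the inequality $F<m+k$.
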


\begin{proof}
Let $\mathcal{\bar{P}}(n)$ be the set of partitions of $n$ with no parts of size $1$. We define a bijection $$\phi:\setGenus(3n+2-j,2n-j)\to \setGenus(3(n-1)+2-j,2(n-1)-j)\cup\mathcal{\bar{P}}(n+2)$$
by  $$\lambda\longmapsto\begin{cases} \lambda \setminus \{2,1\} &\text{if $2\in\lambda$}\\
\{\lambda_1-1,\lambda_2-1,\ldots,\lambda_{2n-j-m_\lambda+1}-1\} &\text{if $2\not\in \lambda$.} 
\end{cases}
$$ 

We recall that nonempty partitions corresponding to numerical semigroups have at least one part of size 1, so this map is well-defined. To show that $\phi$ is, in fact, a bijection, we will show the following four claims:
\begin{enumerate}
    \item For any $\lambda \in \setGenus(3n+2-j,2n-j)$ with $2\not \in \lambda$, $\{\lambda_1-1,\lambda_2-1,\ldots,\lambda_{2n-j-m_\lambda+1}-1\}\in \mathcal{\bar{P}}(n+2)$.
    \item For any $\lambda\in \setGenus(3n+2-j,2n-j)$ with $2\in \lambda$, $\lambda\setminus\{2,1\}\in \setGenus(3(n-1)+2-j,2(n-1)-j)$.
    \item For any $\lambda\in \mathcal{\bar{P}}(n+2)$   $\{\lambda_1+1,\lambda_2+1,\ldots,\lambda_{\ell}+1,1^{2n-j-\ell}\}\in \setGenus(3n+2-j,2n-j)$, where $\ell:=\ell(\lambda)$.
    \item For any $\lambda\in \setGenus(3(n-1)+2-j,2(n-1)-j)$, $\lambda\cup\{2,1\}\in \setGenus(3n+2-j,2n-j)$.
\end{enumerate}

\noindent {\bf Proof of 1:} Let $\lambda\in \setGenus(3n+2-j,2n-j)$ such that $2\not\in \lambda$. Note that, if we subtract $1$ from every part of $\lambda$, the resulting partition has size $3n+2-j-(2n-j)=n+2$ and no parts of size $1$, completing the proof of the first claim. 

\noindent {\bf Proof of 2 and 4:} Let $\lambda$ be a partition of $3n+2-j$ into $2n-j$ parts such that $2,1\in \lambda$. Define $\mu=\lambda\setminus \{2,1\}.$ We will show that $S_\lambda$ is a numerical semigroup if and only if $S_\mu$ is a numerical semigroup. Note that $|\mu|=3n+2-j-3=3(n-1)+2-j$ and $\ell(\mu)=2n-j-2=2(n-1)-j$. Let $k$ be the smallest element in $S_\lambda$ greater than $m_\lambda$, so $\lambda$ has $(2n-j-m_\lambda+1)$ many parts of size at least two and $(2n-j-k+2)$ many parts of size at least three. Similarly, because $k-2$ is the smallest element in $S_\mu$ greater than $m_\mu$, $\mu$ has $(2(n-1)-j-m_\mu+1)$ many parts of size at least two and $(2(n-1)-j-k+4)$ many parts of size at least three. Since the Frobenius number corresponds to the largest hook length, we remove cells inside the largest hook to see that 
 \begin{align*}
F_{\lambda}&\le (3n+2-j)-(2n-j-(m_{\lambda}-1))-(2n-j-k+1)\\
&=m_{\lambda}+k+(j-n)
\end{align*}
and 
\begin{align*}
    F_{\mu}&\le (3n-1-j)-(2(n-1)-j-m_\mu)-(2(n-1)-j-k+3)\\
    &=m_{\mu}+j+k-2-n.
\end{align*}
Thus, since $j<n-4$, we obtain $F_{\lambda}<m_{\lambda}+k$ and $F_{\mu}<m_{\mu}+k-2$. Finally note that, because $m_{\lambda}=m_{\mu}+1$ and $F_{\lambda}=F_{\mu}+2$, $F_\lambda\ne 2m_\lambda\iff F_\mu\ne 2m_\mu$.


\noindent{\bf Proof of 3:} Let $\lambda\in \mathcal{\bar{P}}(n+2)$ and define $\ell:=\ell(\lambda)$. Define $\mu=\{\lambda_1+1,\lambda_2+1,\ldots,\lambda_\ell+1,1^{2n-j-\ell}\}$. Note that $m_{\mu}=2n-j-\ell+1$ and, since $\lambda$ has no parts of size $1$, $\lambda_1\le n+2-2(\ell-1)$. Therefore,
\begin{align*}
   F_{\mu}&=(2n-j)+\lambda_1\\
   &\le (2n-j)+(n+2-2(\ell-1))\\
   &= 3n+4-j-2\ell.
\end{align*}
Thus, we see that $F_\mu-2m_\mu\le 2+j-n<0$, because $j+2<n$. Since $F_\mu<2m_\mu$, $S_\mu$ must be a numerical semigroup. Additionally, $\ell(\mu)=2n-j$ and $|\mu|=|\lambda|+2n-j=3n+2-j$, so $\mu\in \setGenus(3n+2-j,2n-j),$ as desired.


\end{proof}

\begin{remark}
    In the cases where $j\in \{0,1,2\}$, Theorem \ref{genusRecurrence} follows directly from Theorem \ref{GenusBoxedPartitions}. However, in Theorem \ref{genusRecurrence}, we are able to obtain $\countingFuncGenus(3n+2-j,2n-j)-\countingFuncGenus(3(n-1)+2-j,2(n-1)-j)=\bar{p}(n+2)$ for a much wider range of possible $j$s. 
\end{remark}

\section{Partitions of $n$ that correspond to semigroups with Frobenius number $f$}\label{sec:Frobenius}

We begin this section by recalling that for a given partition $\lambda$, the Frobenius number $f_\lambda$ of the corresponding numerical set is equal to the largest hook length of $\lambda$ and that $\countingFuncFrob(n,f)$ denotes the number of partitions of $n$ with Frobenius number $f$. In this section, we collect identities related to $\countingFuncFrob(n,f)$. We begin by counting the number of partitions with exactly one hook that correspond to numerical semigroups. We then expand to counting partitions with fixed difference between the size of the partition and the Frobenius number. 

\begin{theorem}\label{prop:PNf_onehook}
For any positive integers $n$, we have
$$PF(n,n)=\left \lceil \frac{n}{2}\right \rceil.$$
\end{theorem}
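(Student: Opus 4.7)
The plan is to observe that the hypothesis $f_\lambda = n$ for $\lambda \in \mathcal{P}(n)$ forces $\lambda$ to be a hook shape, then determine exactly which such hooks correspond to numerical semigroups. Since the $(1,1)$-hook has length $\lambda_1 + \ell(\lambda) - 1$ and this hook contains $\lambda_1 + \ell(\lambda) - 1$ distinct cells while $|\lambda| \ge \lambda_1 + \ell(\lambda) - 1$ with equality iff every cell lies in row $1$ or column $1$, the condition $|\lambda| = n = f_\lambda$ (noting $f_\lambda \ge h_{(1,1)}(\lambda)$) pins down $\lambda = (a, 1^b)$ with $a+b = n$, $a \ge 1$, $b \ge 0$.

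Next, I would use the fact that the gap set of $S_\lambda$ consists of the leftmost-column hook lengths. For $\lambda = (a,1^b)$ these are $h_{(1,1)} = a+b = n$ and $h_{(i,1)} = b+2-i$ for $2 \le i \le b+1$, giving
\[
G(S_\lambda) = \{1, 2, \ldots, b\} \cup \{n\}, \qquad S_\lambda = \{0\} \cup \{b+1, b+2, \ldots, n-1\} \cup \{n+1, n+2, \ldots\},
\]
so $m_\lambda = b+1$. (The edge case $b=0$, $\lambda = (n)$, puts $1 \in S_\lambda$ so it is a semigroup only when $n=1$; this fits naturally into the count below.)

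Then I would determine when $S_\lambda$ is closed under addition. Any sum of two positive elements is at least $2(b+1) = 2m_\lambda$, so if $a \le b+1$, then $f_\lambda = n = a+b \le 2b+1 < 2m_\lambda$ and the third bullet of the preliminary observations guarantees $S_\lambda$ is a numerical semigroup. Conversely, if $a \ge b+2$, then both $b+1$ and $a-1$ lie in $S_\lambda$ (since $a-1 \ge b+1$), yet $(b+1) + (a-1) = n \notin S_\lambda$, so $S_\lambda$ is not a semigroup.

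Finally, counting pairs $(a,b)$ with $a + b = n$, $a \ge 1$, and $a \le b + 1$ amounts to counting $a \in \{1, 2, \ldots, \lfloor (n+1)/2 \rfloor\} = \{1, 2, \ldots, \lceil n/2 \rceil\}$, each giving a distinct hook, so $PF(n,n) = \lceil n/2 \rceil$. The whole argument is essentially a one-shot structural reduction followed by a short closure check; the only place requiring any care is verifying that the failure of closure when $a \ge b+2$ is witnessed by elements that genuinely lie in $S_\lambda$, which is the inequality $a-1 \ge b+1$.
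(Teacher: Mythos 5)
Your proposal is correct and follows essentially the same route as the paper: reduce to hook shapes $(a,1^b)$, show the hook corresponds to a numerical semigroup exactly when the leg is long enough (your condition $a\le b+1$ is equivalent to the paper's $b\ge\lfloor n/2\rfloor$, with closure from $f_\lambda<2m_\lambda$ and non-closure witnessed by two explicit elements summing to $n$), and count. The only cosmetic differences are that you write out the gap set explicitly and use the witnesses $b+1$ and $a-1$ where the paper uses $\lfloor n/2\rfloor$ and $\lceil n/2\rceil$.
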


\begin{proof}
Let $\lambda$ be a partition of size $n$ that corresponds to the numerical semigroup $S_{\lambda}$ with Frobenius number $n$. Since the Frobenius number of a numerical semigroup is equal to the largest hook length of the corresponding partition, $\lambda$ must consist of exactly one hook, with hook length $n$. To determine the possible number of configurations of that hook, we consider the possible lengths of the leg of the hook, where leg refers to the parts of size $1$. 

We will show that $S_{\lambda}$ is a numerical semigroup if and only if the leg of the hook has length at least $\lfloor\frac{n}{2}\rfloor$. 
First, assume the leg is at least $\lfloor\frac{n}{2}\rfloor$. Since $m_\lambda$ is the smallest nonzero element of $S_\lambda$ and $m_\lambda>\frac{n}{2}$, $x+y>n$ for any $x,y\in S_{\lambda}$. Since $n$ is the Frobenius number of $S_{\lambda}$, we know that $S_\lambda$ is closed and, thus, a numerical semigroup. Next, assume that the leg of the hook has length less than $\lfloor\frac{n}{2}\rfloor$. Then, $\lfloor\frac{n}{2}\rfloor,\lceil\frac{n}{2}\rceil\in S_\lambda$, so $\lfloor\frac{n}{2}\rfloor+\lceil\frac{n}{2}\rceil=n\in S_\lambda$, contradicting the fact that $n$ is the Frobenius number of $S_\lambda$. 

There are $n-\lfloor\frac{n}{2}\rfloor=\lceil\frac{n}{2}\rceil$ many partitions with leg at least $\lfloor\frac{n}{2}\rfloor$, so we see that $\countingFuncFrob(n,n)=\lceil\frac{n}{2}\rceil$.
\end{proof}

Depending on the parity of the size of the partition, we count the number of partitions corresponding to numerical semigroups, where the Frobenius number is close to the size of the partition. 

\begin{theorem} For $n > 2$, we have the following 
 
\begin{enumerate}
\item $\countingFuncFrob(2n,2n-1)=n-1$
\item $\countingFuncFrob(2n+1,2n)=n-2$
\item $\countingFuncFrob(2n,2n-2)=\countingFuncFrob(2n-1,2n-3)=2n-7$
\item $\countingFuncFrob(2n, 2n-3)=3n-11$
\item $\countingFuncFrob(2n-1, 2n-4)=3n-17$
\end{enumerate}
\end{theorem}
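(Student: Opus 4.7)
The plan is to handle all five parts uniformly by a case analysis on the shape of the ``interior'' of $\lambda$. If $|\lambda| = n_0$ has largest hook length $f$, then $f = \lambda_1 + \ell(\lambda) - 1$ and the cells of $\lambda$ outside the principal hook form an interior subpartition $\lambda^\circ$ defined by $\lambda^\circ_i := \lambda_{i+1} - 1$, with $|\lambda^\circ| = n_0 - f$. Across parts (1)--(5) we have $n_0 - f \in \{1,2,3\}$, so $\lambda^\circ$ ranges over the short menu of shapes $(1)$, $(2)$, $(1,1)$, $(3)$, $(2,1)$, $(1,1,1)$. Fixing the interior shape reduces $\lambda$ to the single parameter $\ell := \ell(\lambda)$, with $\lambda_1 = f + 1 - \ell$ and multiplicity $m_\lambda = \ell - \ell(\lambda^\circ)$; the requirement that $\lambda$ contain a part of size $1$ becomes $\ell \geq \ell(\lambda^\circ) + 2$.

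For each interior shape I will read off the first-column hook lengths explicitly; these are precisely the gaps of $S_\lambda$. For example, when $\lambda^\circ = (1)$ the gap set is $\{1,\ldots,\ell-2,\ell,f\}$; when $\lambda^\circ = (2)$ it is $\{1,\ldots,\ell-2,\ell+1,f\}$; when $\lambda^\circ = (1,1)$ it is $\{1,\ldots,\ell-3,\ell-1,\ell,f\}$; and analogous descriptions hold for the three interior shapes of size $3$. These read-offs render $S_\lambda$ completely explicit, so the semigroup closure test becomes a finite verification.

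The closure test reduces to: no gap $g$ equals $x+y$ for some $x,y \in S_\lambda \setminus \{0\}$. Gaps strictly less than $m_\lambda$ are automatic, and for moderately large $\ell$ the ``middle'' gaps near $\ell$ (namely $\ell$, $\ell\pm 1$, or $\ell+2$, depending on $\lambda^\circ$) are too small to be sums. The main condition to analyze is whether $f = (m_\lambda + j) + (f - m_\lambda - j)$ for some $j \geq 0$ with both summands in $S_\lambda$; the number of candidate $j$ is $O(1)$ per shape, and for each I check whether $f - m_\lambda - j$ lies in $S_\lambda$ by consulting the explicit gap set. In each case the set of valid $\ell$ emerges as a consecutive integer interval (occasionally a union of two such intervals) whose total size is linear in $n$. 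Summing these over the interior shapes yields the claimed totals. For orientation: part (1) recovers $n-1$ from $\ell \in \{n,\ldots,2n-2\}$; part (2) recovers $n-2$ from $\ell \in \{n+2,\ldots,2n-1\}$; and in part (3a) the interior shapes $(2)$ and $(1,1)$ contribute $n-4$ and $n-3$ valid lengths, summing to $2n-7$.

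The main obstacle will be the bookkeeping in parts (4) and (5), where three interior shapes, each with up to two middle gaps and several candidate decompositions of $f$, must be analyzed simultaneously. In particular, the parity of $f$ controls whether the ``diagonal'' decomposition $f = (f/2) + (f/2)$ is integer-valued, and this is what ultimately distinguishes $3n-11$ in part (4) (where $f = 2n-3$ is odd) from $3n-17$ in part (5) (where $f = 2n-4$ is even). Edge cases also require care for small $\ell$ (where the interior ceases to fit inside the hook, i.e.\ $\lambda_1 \leq \lambda^\circ_1$) and for small $n$, since $3n - 17 < 0$ unless $n \geq 6$, so the stated hypothesis $n > 2$ is effectively stricter for parts (4) and (5). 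Once the valid-$\ell$ intervals are tabulated and summed across interior shapes, the formulas $n - 1$, $n - 2$, $2n - 7$, $3n - 11$, $3n - 17$ drop out.
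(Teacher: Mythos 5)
Your decomposition of $\lambda$ into its largest hook plus the interior partition $\lambda^\circ$ of size $n_0 - f \in \{1,2,3\}$, followed by a finite closure check over the possible interior shapes, is essentially the same strategy as the paper's proof (which phrases it as nesting a partition of size $j$ inside the $\left\lceil \frac{2n-j}{2}\right\rceil$ admissible hooks and tabulating the exceptional non-semigroup cases), and the valid-$\ell$ interval counts you report for parts (1)--(3) agree with the paper's tallies. Your caveat that the hypothesis $n>2$ must effectively be tightened for parts (4) and (5) (since, e.g., $3n-17<0$ for $n\le 5$) is a fair observation that the paper's statement does not address.
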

\begin{proof}
From Proposition \ref{prop:PNf_onehook}, we have that $\countingFuncFrob(n,n)= \left \lceil \frac{n}{2} \right \rceil$. In each case we consider each partition $\lambda$ in  $\setFrob(n, n-j)$ as a hook of length $n-j$ (exactly those partitions in $\setFrob(n-j,n-j)$) surrounding a partition $\pi$ of size $j$. We then consider all $\left \lceil \frac{n-j}{2} \right \rceil p(j)$ possible combinations of hooks and partitions of size $j$, and note those which do not correspond to numerical semigroups.\\
\noindent{\bf Proof of 1:}  Consider nesting a partition of size $1$ into each of the $n$ partitions in $\setFrob(2n-1, 2n-1)$. This is possible in every case except for the partition $(1^{2n-1})$ (where the resulting diagram would not correspond to a partition). Thus, there are exactly $n-1$ partitions in $\countingFuncFrob(2n,2n-1)$. \\
\noindent{\bf Proof of 2:} Consider nesting a partition of size $1$ into each of the $n$ partitions in $\setFrob(2n, 2n)$. The resulting diagram represents a partition that corresponds to a numerical semigroup with Frobenius number $2n$ in all but two cases: the partition of all ones (where the resulting diagram would not correspond to a partition), and the partition with $n$ ones and one part of size $n$ (where the resulting partition $\lambda$ would have $m_\lambda=n$ and $F_\lambda = 2n$, and would no longer correspond to a numerical semigroup).\\ 
\noindent{\bf Proof of 3:} Consider the two sets of partitions $\setFrob(2n-2,2n-2)$ and $\setFrob(2n-3,2n-3)$, which each have $n-1$ partitions. For each of these, consider the two options for inserting a partition of size 2 inside the largest hook: the map $\phi_1: \lambda\mapsto \phi_1(\lambda)$ adding two to the second part of $\lambda$, or $\phi_2: \lambda\mapsto \phi_2(\lambda)$ adding one to each of the second and third parts of the original partition. This procedure produces $2(n-1)$ diagrams for each set.  In both case there are five diagrams which do not represent partitions corresponding to numerical semigroups. In both cases, the remaining partitions will correspond to numerical semigroups, leaving the size of the set as $2(n-1) -5 = 2n-7$.

\begin{table}[h]
    \centering
$$
\begin{array}{cccc}
\toprule
    \lambda \in \setFrob(2n-2,2n-2)& \phi_1(\lambda) &\phi_2(\lambda) & \text{reason for failing}  \\ \midrule
    \{1^{2n-2}\} & \{1,3,1^{2n-4}\} & \{ 1, 2^2, 1^{2n-5}\} &\text{not a partition}\\
    \{2,1^{2n-4}\} & \{2,3,1^{2n-5}\}& \ & \text{not a partition}\\
    \{n-1,1^{n-1}\} &\{n-1,3, 1^{n-2}\} & \ &2m_{\phi_1(\lambda)} = F\\
    \{n,1^{n-2}\} & \ &\{n,2^2, 1^{n-4}\}  &2m_{\phi_2(\lambda)} = F 
    \\ \bottomrule
    \toprule
    \lambda \in \setFrob(2n-3, 2n-3) & \phi_1(\lambda) &\phi_2(\lambda) & \text{reason for failing}  \\ \midrule
    \{1^{2n-3}\} & \{1,3,1^{2n-5}\} & \{ 1, 2^2, 1^{2n-6}\} &\text{not a partition}\\
    \{2,1^{2n-5}\} & \{2,3,1^{2n-6}\}& \ & \text{not a partition}\\
    \{n-1,1^{n-2}\} &\{n-1,3, 1^{n-3}\} & \{n-1, 2^2, 1^{n-4}\} &m_{\phi_i(\lambda)} + k = F \\
    & \ & \ & \ \ \ \text{for some } k \in S_{\phi_i(\lambda)}
    \\ \bottomrule
    \end{array}
    $$
    \caption{Partitions in $\setFrob(2n-2,2n-2)$ [top] and $\setFrob(2n-3, 2n-3)$ [bottom], whose images under the maps $\phi_1$  and $\phi_2$ are not partitions corresponding to numerical semigroups.}
    \label{}
\end{table}

\noindent{\bf Proof of 4:} Consider nesting partitions of size $3$ into the $n-1$ partitions in $\setFrob(2n-3,2n-3)$. The three possibilities for this addition are represented by: the map $\phi_1:\lambda \rightarrow \phi_1(\lambda)$ adding three to the second part of $\lambda$, the map $\phi_2:\lambda\rightarrow \phi_2(\lambda)$ adding two to the second part and one to the third part of $\lambda$, and the map $\phi_3(\lambda): \lambda\rightarrow \phi_3(\lambda)$ adding one each to the second through fourth parts of $\lambda$. This procedure produces $3(n-1)$ diagrams, and of these, there are 8 diagrams which do not represent partitions corresponding to numerical semigroups.  
\begin{table}[h]
    \centering
$$
\begin{array}{ccccc}
\toprule
    \lambda \in \setFrob(2n-3,2n-3)& \phi_1(\lambda) &\phi_2(\lambda) & \phi_3(\lambda)& \text{reason for failing}  \\ \midrule
    \{1^{2n-3}\} & \{1,4,1^{2n-5}\} & \{ 1, 3, 2, 1^{2n-6}\} & \{1, 2^3, 1^{2n-7}\} &\text{not a partition}\\ \{2, 1^{2n-5}\} &
    \{2,4,1^{2n-6}\} & \{2,3,2,1^{2n-7}\}& \  & \text{not a partition}\\
    \{3,1^{2n-6}\} & \{3, 4, 1^{2n-7}\} & \ &  \   & \text{not a partition}\\
    \{n-1,1^{n-2}\} & \{n-1,4,1^{n-3}\} & \ &\{n-1,2^3, 1^{n-5}\}  & m_{\phi_i(\lambda)} + k = F \\
    & \ & \ & \ &  \ \ \ \text{for some } k \in S_{\phi_i(\lambda)}\\
 \bottomrule
    \end{array}$$
    \end{table}

\noindent{\bf Proof of 5:} Consider nesting the partitions of size $3$ into the $n-2$ partitions in  $\setFrob(2n-4,2n-4)$. Consider again the maps $\phi_1:\lambda \rightarrow \phi_1(\lambda)$ adding three to the second part of $\lambda$, $\phi_2:\lambda\rightarrow \phi_2(\lambda)$ adding two to the second part and one to the third part of $\lambda$, and $\phi_3(\lambda): \lambda\rightarrow \phi_3(\lambda)$ adding one each to the second through fourth parts of $\lambda$. This procedure produces $3(n-2)$ diagrams, and of these, there are 11 diagrams which do not represent partitions corresponding to numerical semigroups.   

\begin{table}[h]
    \centering
$$
\begin{array}{ccccc}
\toprule
    \lambda \in \setFrob(2n-4,2n-4)& \phi_1(\lambda) &\phi_2(\lambda) & \phi_3(\lambda)& \text{reason for failing}  \\ \midrule
    \{1^{2n-4}\} & \{1,4,1^{2n-6}\} & \{ 1, 3, 2, 1^{2n-7}\} & \{1, 2^3, 1^{2n-8}\}&\text{not a partition}\\ \{2, 1^{2n-6}\} &
    \{2,4,1^{2n-7}\} & \{2,3,2,1^{2n-8}\}&  \ & \text{not a partition}\\
    \{3,1^{2n-7}\} & \{3, 4, 1^{2n-8}\} & \ & \   & \text{not a partition}\\
    \{n-2,1^{n-2}\} & \{n-2, 4, 1^{n-3}\} &\{n-2,3,2, 1^{n-4}\} & \{n-2, 2^3, 1^{n-5}\} &m_{\phi_i(\lambda)} + k = F \\
   \{n-3, 1^{n-1}\} & \ & \{n-1, 3, 2, 1^{n-3}\} & \ &  \ \ \ \text{for some } k \in S_{\phi_i(\lambda)}\\
    \{n-4,1^{n}\} & \ & \ &\{n-4,2^3, 1^{n-3}\}  &2m_{\phi_3(\lambda)} = F 
    \\ \bottomrule
    \end{array}$$
    \end{table}
\end{proof}

\begin{remark}\label{remark:Frob} We note that the process described above generalizes. For $j<2n$, any partition $\lambda$ in $\setFrob(2n, 2n-j)$ can be considered as a partition $\pi$ of size $j$ surrounded by a hook of size $2n-j$. There are exactly $\left \lceil \frac{2n-j}{2} \right \rceil$ possible such hooks by Proposition \ref{prop:PNf_onehook}. Through this containment, we have that $\countingFuncFrob(2n,2n-j) \leq p(j)\left \lceil \frac{2n-j}{2} \right \rceil $. This inequality is strict for $j >0$, and in particular we note that 
\[\countingFuncFrob(2n,2n-j)\leq p(j)\left \lceil\frac{2n-j}{2}\right\rceil-\sum_{i=1}^{n-\lfloor\frac{j}{2}\rfloor} p(j,i),\]
where  $p(j,i)$ is the function that counts the number of partitions of $j$ with largest part greater than or equal to $i$. Indeed, the rightmost term removes the forbidden situation where the second row of the Ferrers diagram would be longer than the first row. 
\end{remark}

Next, we obtain a recurrence relation for the number of partitions for which the difference between the size and Frobenius number is constant. 

\begin{theorem}\label{thm:PNf_large}

 For $n\ge 4k+3$,
\[
\countingFuncFrob(n,n-k)-\countingFuncFrob(n-2,n-k-2)=p(k),
\]
where $p(k)$ represents the number of partitions of $k$. 
\end{theorem}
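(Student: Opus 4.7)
My plan is to construct an explicit bijection
\[
\Phi:\setFrob(n,n-k)\longrightarrow\setFrob(n-2,n-k-2)\,\sqcup\,\mathcal{P}(k),
\]
which immediately gives the identity. Every $\lambda\in\setFrob(n,n-k)$ has its maximum hook at position $(1,1)$ with length $n-k$, so the inner partition $\pi(\lambda)$ formed by the cells $(i,j)$ with $i,j\geq 2$ has size $k$. I would partition $\setFrob(n,n-k)$ according to whether $\lambda_1>\lambda_2$ or $\lambda_1=\lambda_2$, and map these two classes respectively onto $\setFrob(n-2,n-k-2)$ and $\mathcal{P}(k)$.

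For the $\mathcal{P}(k)$ piece, I would send $\pi\in\mathcal{P}(k)$ to $\lambda_\pi:=(\pi_1+1,\pi_1+1,\pi_2+1,\ldots,\pi_{\ell(\pi)}+1,1^t)$, where $t$ is chosen so $|\lambda_\pi|=n$. This construction forces $\lambda_{\pi,1}=\lambda_{\pi,2}$, has $\pi$ as inner partition, and a direct computation gives $F_{\lambda_\pi}=n-k$ and $m_{\lambda_\pi}=n-k-\pi_1-\ell(\pi)$. Since every partition of $k$ satisfies $\pi_1+\ell(\pi)\leq k+1$ (with equality only for hook shapes), the hypothesis $n\geq 4k+3$ (in fact $n\geq 3k+3$ suffices) yields $2m_{\lambda_\pi}-F_{\lambda_\pi}=n-k-2(\pi_1+\ell(\pi))>0$, so by the bullet in Section~\ref{sec:Preliminaries}, $S_{\lambda_\pi}$ is a numerical semigroup. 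This map is clearly inverse to extracting the inner partition, so it bijects $\mathcal{P}(k)$ with $\{\lambda\in\setFrob(n,n-k):\lambda_1=\lambda_2\}$.

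For the $\setFrob(n-2,n-k-2)$ piece, I would use $\psi(\mu_1,\mu_2,\ldots,\mu_\ell):=(\mu_1+1,\mu_2,\ldots,\mu_\ell,1)$, which satisfies $|\psi(\mu)|=|\mu|+2$, $F_{\psi(\mu)}=F_\mu+2$, $m_{\psi(\mu)}=m_\mu+1$, preserves the inner partition, and forces $\psi(\mu)_1>\psi(\mu)_2$. Its candidate inverse $\lambda\mapsto(\lambda_1-1,\lambda_2,\ldots,\lambda_{\ell-1})$ is a valid partition whenever $\lambda_1>\lambda_2$, since $\lambda_\ell=1$ holds automatically for $\lambda\in\setFrob(n,n-k)$.

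The hard part will be showing that both $\psi$ and its inverse preserve the numerical semigroup property; when $2m_\mu>F_\mu$ this is immediate, so the work is in the regime $2m_\mu\leq F_\mu$. I would attack this via the explicit gap decomposition
\[
G(S_\mu)=\{1,2,\ldots,t\}\,\cup\,\{h_i+t+1:1\leq i\leq p\}\,\cup\,\{F_\mu\},
\]
valid for $\mu=(\mu_1,\pi_1+1,\ldots,\pi_p+1,1^t)$ with $p=\ell(\pi)$ and $h_i=\pi_i+p-i$ the $(i,1)$-hook of the inner partition $\pi$. Chasing this structure shows $S_{\psi(\mu)}$ fails to be a semigroup precisely when there exists $i$ with $h_i\geq t+2$ and $h_i-1\notin\{h_j:1\leq j\leq p\}$. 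For $i=p$ this would give $\pi_p\geq t+2$, but then $a=m_\mu$ and $b=\pi_p$ both lie in $S_\mu$ and satisfy $a+b=h_p+t+1\in G(S_\mu)$, contradicting $\mu\in\setFrob(n-2,n-k-2)$ and forcing $\pi_p\leq t$. For $i<p$, a parallel pair analysis, using either $(m_\mu,h_i)$ directly or, when $h_i$ is itself a gap, a pair built from $F_\mu-m_\mu$, combined with the partition bound $\pi_1+\ell(\pi)\leq k+1$ and the hypothesis $n\geq 4k+3$, eliminates the remaining failure modes. The inverse direction (that $\psi^{-1}(\lambda)$ is a semigroup whenever $\lambda_1>\lambda_2$) follows from the same gap-set analysis applied to $\lambda$, together with the observation that $n\geq 4k+3$ forces every $\lambda\in\setFrob(n,n-k)$ to have at least two $1$-parts, so that $\psi^{-1}(\lambda)$ retains a $1$-part and is therefore in the image of the numerical semigroup correspondence.
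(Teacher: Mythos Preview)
Your bijection is exactly the one the paper constructs: split $\setFrob(n,n-k)$ by whether $\lambda_1=\lambda_2$ or $\lambda_1>\lambda_2$, send the first piece to $\mathcal{P}(k)$ by peeling off the outer hook, and send the second to $\setFrob(n-2,n-k-2)$ via $\psi$. Your treatment of the $\lambda_1=\lambda_2$ case is correct and matches the paper.

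The gap is in the $\lambda_1>\lambda_2$ case. Your claimed characterization ``$S_{\psi(\mu)}$ fails precisely when some $h_i\geq t+2$ with $h_i-1\notin\{h_j\}$'' is asserted without justification, and the $i<p$ case is only gestured at (``a parallel pair analysis \ldots\ eliminates the remaining failure modes''). This is not a proof, and it is not clear the characterization is even correct as stated: for a gap $h_i+t+2$ there are many candidate pairs $(a,b)$, not just those involving $m_{\psi(\mu)}$.

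The paper short-circuits this entire analysis with one a-priori inequality: it proves $m_\mu>k$ directly from $n\geq 4k+3$. In your notation this says $t+1>k$, and since $h_1=\pi_1+\ell(\pi)-1\leq k$ (the bound you already cite), it follows that $h_i\leq h_1\leq k<t+1$ for every $i$. Hence the scenario $h_i\geq t+2$ is \emph{vacuous}, and every gap of $S_\mu$ (and of $S_{\psi(\mu)}$) other than the Frobenius number lies strictly below $2m$. After that, the only thing left to check is that $F_{\psi(\mu)}$ is not a sum of two nonzero elements, which your shift argument handles correctly; the same reasoning with the roles reversed gives the inverse direction. The derivation of $m_\mu>k$ is short: if $m_\mu\leq k$ then $n-k-2=F_\mu=(\lambda_1-\lambda_2)+h_{(2,1)}(\mu)+1\leq (m_\mu-1)+(m_\mu+k)+1\leq 3k$, contradicting $n\geq 4k+3$. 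Inserting this bound replaces your unfinished case analysis with a one-line argument.
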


\begin{proof}

Consider the set of partitions $\setFrob(n,n-k)$ and recall the largest hook length of each of these partitions is $n-k$. We consider the set $\mathcal{A}_{n,n-k} \subseteq \setFrob(n,n-k)$ of partitions  $\lambda$ with $\lambda_1>\lambda_2$, and the set $\mathcal{B}_{n,n-k} \subseteq \setFrob(n,n-k)$ of partitions $\lambda$ with $\lambda_1 = \lambda_2$. 

Note that $\setFrob(n,n-k)=\mathcal{A}_{n,n-k} \cup \mathcal{B}_{n,n-k}$ and $\mathcal{A}_{n,n-k} \cap \mathcal{B}_{n,n-k} = \emptyset$. We will show that:
\begin{enumerate}
    \item $|\mathcal{A}_{n,n-k}| = \countingFuncFrob(n-2,n-k-2)$
    \item $|\mathcal{B}_{n,n-k}| = p(k)$
\end{enumerate}

\noindent\textbf{Proof of 1:}
We define a map 
\[ \phi:\setFrob(n-2, n-k-2) \rightarrow  \mathcal{A}_{n,n-k} \]
by 
\[ \{\lambda_1, \lambda_2, \ldots, \lambda_\ell\} \mapsto \{\lambda_1+1,\lambda_2,\ldots,\lambda_\ell,1\}.  \]


First, we will show that  $\phi$ is well-defined. The size of $\phi(\lambda)$ is exactly $n$, and the Frobenius number, $F_{\phi(\lambda)}$ is exactly $n-k$.  

 To show that $S_{\phi(\lambda)}$ is a numerical semigroup, we first need to show that $m_\lambda> k$. Suppose towards a contradiction that $m_\lambda \leq k$. We know that the largest hook length of $\lambda$ is $n-k-2$, and the number of boxes in the Ferrers diagram of $\lambda$ not included in the largest hook is exactly $k$. Note that the largest hook length of $\lambda$ is equal to $\lambda_1-\lambda_2 + h_{(2, 1)}(\lambda) + 1$, where $h_{(2,1)}$ is the hook length of cell $(2,1)$ of $\lambda$, and $\lambda_{i} -\lambda_{i+1} < m_\lambda$ for all $1<i<\ell$. Then, since the hook $h_{(2,1)}(\lambda)$ includes all parts of size one, and beyond that at most $k$ additional boxes, we have  $h_{(2,1)}(\lambda) \leq m_\lambda + k$, so
\[n-k-2 =\lambda_1-\lambda_2 + h_{(2, 1)}(\lambda) + 1  \leq \lambda_1-\lambda_2 + m_\lambda + k + 1 < 2m_\lambda + k + 1 \leq 3k +1.\]
Since we assumed that $n \geq 4k+3$, we get a contradiction. So $m_\lambda> k$.

We now show that $S_{\phi(\lambda)}$ is a numerical semigroup. First, we have $m_{\phi(\lambda)} = m_{\lambda}+1$ and $f_{\phi(\lambda)} = f_\lambda + 2$. Note that $h_{(2,1)}(\lambda) \leq m_\lambda + k < 2m_\lambda$, so $h_{(2,1)}(\phi(\lambda)) < 2m_\lambda +1$. Thus any elements of the complement of $S_{\phi(\lambda)}$ are either equal to $f_{\phi(\lambda)}$ or are less than $2m_\lambda +1 < 2m_{\phi(\lambda)}$. To show that $S_{\phi(\lambda)}$ is closed under addition, let $x, y$ be nonzero elements of  $S_{\phi(\lambda)}$ (if $x$ or $y$ were zero, then $x+y \in S_{\phi(\lambda)}$). If $x>f_{\phi(\lambda)}-1$ or $y>f_{\phi(\lambda)}-1$, then since $x,y>1$, we have $x+y>f_{\phi(\lambda)}$. Now we assume that both $x$ and $y$ are less than $f_{\phi(\lambda)} -1$. Thus, there exist $x', y' \in S_{\lambda}$ such that $x = x'+1$ and $y = y'+1$. Note that $x', y' \geq m_{\lambda}$. Then, $x+y = x'+y'+2 > 2m_\lambda+1$. Furthermore, $x'+y'$ cannot equal $f_\lambda$, so $x+y\ne f_\lambda+2=f_{\phi(\lambda)}$.  Since all elements of the complement of $S_{\phi(\lambda)}$ are either less than $2m_{\lambda}+1$ or $f_{\phi(\lambda)}$, $S_{\phi(\lambda)}$ is closed under addition, and so is a numerical semigroup. Thus, $\phi(\lambda) \in \mathcal{A}_{n, n-k}$.

Next, we consider the inverse map $\phi^{-1}$ for $\mu \in \mathcal{A}_{n,n-k}$. We obtain $\phi^{-1}(\mu) = \{\mu_1-1, \mu_2, \ldots, \mu_{\ell-1}\}$, and note that the removed part $\mu_r =1$. Thus $\phi^{-1}(\mu)$ is a map from $\mathcal{A}_{n,n-k}$ to partitions $\phi^{-1}(\mu)$ of $n-2$ with Frobenius number $n-k-2$.

We will show that the numerical sets that correspond to these partitions are in fact numerical semigroups. Note that $m_{\phi^{-1}(\mu)} = m_\mu - 1$ and $f_{\phi^{-1}(\mu)} = f_\mu -2$. Similarly to above, $h_{(2,1)}(\mu) \leq m_\mu+k$, so $h_{(2,1)}(\phi^{-1}(\mu)) = h_{(2,1)}(\mu)-1 \leq m_\mu+k-1\leq m_{\phi^{-1}(\mu)}+k$.
Then $n-k=h_{(2,1)}(\mu)+ \mu_1 -\mu_2 +1 \leq m_\mu + k + \mu_1 - \mu_2 +1$. 
Suppose toward a contradiction that $m_\mu \leq k+1$, then, $n-k \leq 2k +1 + \mu_1-\mu_2 +1  $, so $n-3k-2 \leq \mu_1 -\mu_2$. Because $n$ must be at least $4k+3$, $\mu_1 - \mu_2 \geq k+1 \geq m_\mu$. This contradicts the fact that $S_\mu$ is a numerical semigroup, so $m_\mu > k+1$. 

We have
$h_{(2,1)}(\mu) < 2m_\mu -1$, so $h_{(2,1)}(\phi^{-1}(\mu)) < 2m_\mu -2 = 2m_{\phi^{-1}(\mu)}.$ 
 Similar to above, all elements of the complement of $S_{\phi^{-1}(\mu)}$ are either less than $2m_{\phi^{-1}(\mu)}$, or are $f_{\phi^{-1}(\mu)}$. Let $x, y$ be nonzero elements of $S_{\phi^{-1}(\mu)}$. Either both $x$ and $y$ are less than $f_{\phi^{-1}(\mu)}$, or at least one of them is greater than $f_{\phi^{-1}(\mu)}$. We can see that $x+y$ must be in $S_{\phi^{-1}(\mu)}$. Thus, $S_\phi^{-1}(\mu)$ is closed under addition, and is a numerical semigroup. Thus, since $\phi$ is a bijection, we have that $|\mathcal{A}_{n,n-k}| = \countingFuncFrob(n-2,n-k-2)$.

\noindent \textbf{Proof of 2:} We now show that $|\mathcal{B}_{n,n-k}| = p(k)$.  Let $\lambda \in \mathcal{B}_{n,n-k}$. Define a map 
\[\psi: \mathcal{B}_{n,n-k} \rightarrow \mathcal{P}(k)\] by
\[\{\lambda_1, \lambda_2, \ldots, \lambda_\ell\} \mapsto \{ \lambda_2 -1, \lambda_3 -1, \ldots, \lambda_\ell -1\}\] Note that this map removes the largest hook of $\lambda$ to get $\psi(\lambda)$, a partition of size $k$. Since $F_\lambda=n-k$, we have $\lambda_1=n-k-(\ell-1)$. Thus, $|\psi(\lambda)|=\sum\limits_{i=2}^{\ell}\lambda_i-(\ell-1)=n-(n-k-(\ell-1))-(\ell-1)=k$. What remains to show is that $\phi$ is surjective. We will show that $\psi^{-1}$ is well-defined. We get the partition $\psi^{-1}(\mu)$ from a partition $\mu$ of $k$ by adding a hook of length $n-k$ with arm exactly equal to $\mu_1$. We now show that $\psi^{-1}(\mu)$ corresponds to a numerical semigroup. Since $\psi^{-1}(\mu)_1 = \psi^{-1}(\mu)_2$, we have,

\[ n-k = h_{(2,1)}(\psi^{-1}(\mu)) + 1\leq m_{\psi^{-1}(\mu)} + k +1.\]

Rewriting the inequality for $m_{\psi^{-1}(\mu)}$ we have
\[ m_{\psi^{-1}(\mu)} \geq n-2k-1 \geq 2k +2.\]
Because $n \geq 4k+3$, $n-k < n - 1 \leq n + (n - 4k -2) = 2n-4k-2$. So $f_{\psi^{-1}(\mu)} < 2n-4k-2 = 2(n-2k-1) \leq 2m_{\psi^{-1}(\mu)}$. Thus, $S_{\psi^{-1}(\mu)}$ is a numerical semigroup and $\psi$ is a bijection.

\end{proof}

\section{Partitions of $n$ that correspond to semigroups with multiplicity $m$} \label{sec:multiplicity}

We begin this section by recalling that for a given partition $\lambda$, the multiplicity $m_\lambda$ of the corresponding numerical set is one more than the number of parts of size one in $\lambda$ and $\countingFuncMult(n,m)$ denotes the number of partitions of $n$ corresponding to a numerical semigroup with multiplicity $m$. Thus, by considering the partition that remains after removing all parts of size one, we illustrate the connection between $\countingFuncMult(n,m)$ and $\bar{p}(n-m+1)$, where $\bar{p}(n)$ counts the number of partitions of $n$ with no parts of size one. We leverage this relationship to further investigate the partitions in $\setMult(n,m)$ which are equinumerous with particular subsets of partitions counted by $\overline{p}(n)$.

\begin{prop}\label{prop:MultPartitionNumbers}
For  all positive integers $m$ and $n$, we have $\countingFuncMult(n,m)\le \overline{p}(n+1-m)$. Furthermore, when $m> \frac{n}{2}$, $\countingFuncMult(n,m)= \overline{p}(n+1-m)$.
\end{prop}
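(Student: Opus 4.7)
The plan is to handle both claims via the natural map
\[
\phi: \setMult(n,m) \to \mathcal{\bar P}(n+1-m), \qquad \phi(\lambda) := \lambda \setminus \{1^{m-1}\},
\]
which strips off all parts of size $1$. Every $\lambda \in \setMult(n,m)$ has exactly $m-1$ parts equal to $1$, so $\phi$ is well-defined (the output has size $n-(m-1)=n+1-m$ and no parts of size $1$) and manifestly injective, immediately giving $\countingFuncMult(n,m) \le \bar p(n+1-m)$.

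For the equality when $m > n/2$, I would prove surjectivity by showing that the candidate inverse $\mu \mapsto \lambda := \mu \cup \{1^{m-1}\}$ always lands in $\setMult(n,m)$. The size of $\lambda$, its count of parts equal to $1$ (which is $m-1$ because $\mu$ contributes none), and hence $m_\lambda = m$ are read off the construction. What needs genuine argument is that $S_\lambda$ is closed under addition, and I would reduce this to verifying $f_\lambda < 2m_\lambda$ via the useful fact from Section \ref{sec:Preliminaries}.

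The Frobenius number is the top-left hook length,
\[
f_\lambda \;=\; \lambda_1 + \ell(\lambda) - 1 \;=\; \mu_1 + \ell(\mu) + m - 2,
\]
so the crux becomes bounding $\mu_1 + \ell(\mu)$. The elementary lemma I would isolate is that any $\mu \in \mathcal{\bar P}(k)$ satisfies $\mu_1 + \ell(\mu) \le |\mu| + 1$; this falls out of $|\mu| \ge \mu_1 + 2(\ell(\mu)-1)$, which uses only that every part of $\mu$ is at least $2$. Substituting $|\mu| = n+1-m$ gives $f_\lambda \le n$, and the hypothesis $m > n/2$ then yields $2m_\lambda = 2m > n \ge f_\lambda$ as required. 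The degenerate case $\mu = \emptyset$ (corresponding to $m = n+1$, so $\lambda = \{1^n\}$) is absorbed into the same inequality.

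I do not anticipate a serious obstacle. The map and its candidate inverse are both natural, and everything beyond bookkeeping reduces to the hook-length estimate above, where the hypothesis $m > n/2$ enters in precisely the right way to upgrade $f_\lambda \le n$ to the strict inequality required by the useful fact.
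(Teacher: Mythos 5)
Your proposal is correct and follows essentially the same route as the paper: strip the $m-1$ parts of size $1$ to get the injection into $\mathcal{\bar P}(n+1-m)$, and for $m>\frac{n}{2}$ verify surjectivity by checking $f_\lambda<2m_\lambda$. The only cosmetic difference is that you derive the bound $f_\lambda\le n$ from the hook-length formula $f_\lambda=\lambda_1+\ell(\lambda)-1$ together with the estimate $\mu_1+\ell(\mu)\le|\mu|+1$, whereas the paper simply observes that the largest hook length never exceeds the size of the partition.
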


\begin{proof}
We begin by noting that, $$p(n\mid \text{exactly $m$ parts of size $1$})=\bar{p}(n-m+1).$$ Since $\countingFuncMult(n,m)\le p(n\mid \text{exactly $m$ parts of size $1$})$, the desired inequality follows.

Next, let $m\ge\frac{n}{2}$ and let $\lambda$ be a partition of $n$ with exactly $m-1$ parts of size 1. Thus $m_\lambda=m$ Since $m_\lambda> \frac{n}{2}$, $2m_\lambda> n$. Furthermore, since $n$ is the size of the partition, $F_\lambda\le n$. Thus, 
$2m_\lambda>F_\lambda$, proving that $S_\lambda$ is a numerical semigroup. 
\end{proof}

For the next results, we introduce new subsets of partitions counted by $\bar{p}(n)$ to explore the exact values of $\countingFuncMult(n,m)$ when $m\approx \frac{n}{2}$, but $m\le \frac{n}{2}$. Let $\mathcal{A}(n)$ be the set of partitions of $n$ with at least two parts and no parts of size $1$ and  $a(n):=|\mathcal{A}(n)|$.

\begin{theorem}\label{prop:multAlmostHalf}
    For $n\ge 5$,
\begin{align}
\countingFuncMult(2n,n)&=a(n+1),\label{PNm2n}\\
\countingFuncMult(2n-1,n-1)&=a(n+1)-1,\label{PNm2n-1}\\
\countingFuncMult(2n-2,n-2)&=a(n+1)-2,\label{PNm2n-2}\\
\countingFuncMult(2n-3,n-3)&=a(n+1)-5.\label{PNm2n-3}
\end{align}
\end{theorem}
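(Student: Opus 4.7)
The plan is to generalize the map from Proposition \ref{prop:MultPartitionNumbers}. For each $k \in \{0,1,2,3\}$, define
\[
\phi_k \colon \setMult(2n-k,\,n-k) \longrightarrow \mathcal{\bar{P}}(n+1)
\]
by removing every part of size $1$. Since a partition in $\setMult(2n-k,n-k)$ has exactly $n-k-1$ parts equal to $1$, this map is injective with inverse $\mu \mapsto \mu \cup \{1^{n-k-1}\}$, so identifying its image amounts to deciding which $\mu \in \mathcal{\bar{P}}(n+1)$ yield $\lambda := \mu \cup \{1^{n-k-1}\}$ corresponding to a numerical semigroup.

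Writing $\ell := \ell(\mu)$, one computes $m_\lambda = n-k$ and largest hook length $F_\lambda = \mu_1 + \ell + n-k-2$, so the sufficient condition $F_\lambda < 2m_\lambda$ becomes $\mu_1 + \ell \le n-k+1$. Since $\mu_i \ge 2$ forces $\mu_1 \le n+3-2\ell$, this bound holds automatically when $\ell \ge k+2$. All candidate exceptions therefore have $\ell \le k+1$, a short finite list. For each such $\mu$, I would compute the first-column hook lengths of $\lambda$ to describe $S_\lambda$ explicitly, note that the partition fails immediately whenever $F_\lambda = 2m_\lambda$ (since then $2m_\lambda$ is a gap, so $m_\lambda+m_\lambda \notin S_\lambda$), and in the remaining boundary cases check directly whether any sum of two small nonzero elements of $S_\lambda$ lands on one of the at most two ``large'' gaps.

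Carrying out this enumeration yields the exceptional lists: $\{n+1\}$ for $k=0$; $\{n+1\}$ and $\{n-1,2\}$ for $k=1$; $\{n+1\}$, $\{n-2,3\}$, and $\{n-3,2,2\}$ for $k=2$; and $\{n+1\}$, $\{n-1,2\}$, $\{n-2,3\}$, $\{n-3,4\}$, $\{n-4,3,2\}$, $\{n-5,2,2,2\}$ for $k=3$. In every list, the single-part partition $\{n+1\}$ is the unique element of $\mathcal{\bar{P}}(n+1) \setminus \mathcal{A}(n+1)$. Using $|\mathcal{\bar{P}}(n+1)| = a(n+1)+1$, subtracting the exceptions gives $a(n+1)$, $a(n+1)-1$, $a(n+1)-2$, and $a(n+1)-5$ respectively, as asserted.

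The hard part is the $k=3$ case, where the boundary partition $\mu = \{n-3,2,2\}$ has $F_\lambda = 2n-5 > 2m_\lambda = 2n-6$ yet, for $n$ large enough, still yields a numerical semigroup: one must check that no sum of two nonzero elements of $S_\lambda$ equals $2n-5$, which requires ruling out each pair $(n-3, n-2)$, $(n, n-5)$, etc.\ by showing at least one component is a gap. A dual subtlety appears for $k=2$, where $\mu=\{n-1,2\}$ has $F_\lambda > 2m_\lambda$ and must be shown to give a numerical semigroup even though its $k=1$ analogue $\{n-1,2,1^{n-2}\}$ does not. Distinguishing which ``$F_\lambda > 2m_\lambda$'' boundary cases are genuine exceptions from those that are not forms the technical core of the argument.
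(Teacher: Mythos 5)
Your proposal follows essentially the same route as the paper's proof: delete the parts of size $1$ to land among partitions of $n+1$ with no parts of size $1$, reduce the search for exceptions to the finitely many $\mu$ with $F_\lambda \ge 2m_\lambda$, and enumerate; your exception lists agree with the paper's (the paper takes $\mathcal{A}(n+1)$, which excludes $\{n+1\}$ from the codomain rather than counting it as an exception, but that is only bookkeeping since $\bar{p}(n+1)=a(n+1)+1$). Your explicit treatment of the borderline candidate $\{n-3,2,2\}$ in the $k=3$ case, showing it satisfies $F_\lambda\ge 2m_\lambda$ yet still yields a numerical semigroup, covers a candidate the paper's table silently omits, so that detail is worth keeping.
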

\begin{proof}
For each $i\in \{0,1,2,3\}$, we define an injection $$\phi_i:\setMult(2n-i,n-i)\to \mathcal{A}(n+1)$$ by 
$$\lambda\mapsto\lambda\setminus\{1^{n-i-1}\}.$$ Note that, since we can define an inverse map by adding $n-i-1$ parts of size $1$ to a partition in $a(n+1)$, we know that $\phi_i$ is injective.  To prove the proposition, we must prove the following claims:
\begin{enumerate}
    \item For any $i\in \{0,1,2,3\}$ and $\lambda\in \setMult(2n-i,n-i)$, $\phi_i(\lambda)\in \mathcal{A}(n+1)$.
    \item For $i\in \{0,1,2\}$, $|\phi_i(\lambda)|=a(n+1)-i$, and, for $i=3$, $|\phi_i(\lambda)|=a(n+1)-5$.
\end{enumerate}

\noindent{\bf Proof of 1:} Fix $i\in \{0,1,2,3\}$ and let $\lambda\in \setMult(2n-i,n-i)$. Since $m_\lambda=n-i$, $\lambda\setminus\{1^{n-i-1}\}$ has no parts of size 1. Furthermore, if the resulting partition is only one part, then $\lambda=\{n+1,1^{n-i-1}\}$, and $n-i, n\in S_\lambda$, but $2n-i\not\in S_\lambda$, contradicting the fact that $S_\lambda$ is a numerical semigroup.  Therefore, $\phi_i(\lambda)\in \mathcal{A}(n+1)$.\\
\noindent{\bf Proof of 2:} Fix $i\in \{0,1,2,3\}$. We will show that, except for $i$ exceptional cases (or 5 exceptional cases when $i =3$), if we take a partition $\mu\in\mathcal{A}(n+1)$, then $\phi_i^{-1}(\mu)=\mu\cup\{1^{n-i-1}\}$ corresponds to a numerical semigroup. Let $\mu\in \mathcal{A}(n+1)$ and define $\lambda=\phi_i^{-1}(\mu)$. Then, $f_\lambda=f_\mu+n-i-1\le n+n-i-1<2n-i$. Let $x,y\in S_\lambda$ with $x,y\ne 0$. Then $x+y\ge 2n-2i$. Thus, if $S_\lambda$ is not a numerical semigroup, we must have $f_\lambda\ge 2n-2i$. In Table \ref{tab:Aexceptions}, we collect the possible partitions $\lambda\in \phi_i^{-1}(\mathcal{A}(n+1))$ with $f_\lambda\ge 2n-2i $ and evaluate when $S_\lambda$ is a numerical semigroup. 
\begin{table}[h]
    \centering
    $$
\begin{array}{ccc}
\toprule
    i &  \{\lambda\in \phi_i^{-1}(\mathcal{A}(n+1))\mid f_\lambda\ge 2n-2i\} &\begin{minipage}{1.75in} \centering {Number of exceptional cases} \end{minipage}\\ \midrule
    0 & \emptyset & 0 \\
    1 & \{n-1,2,1^{n-2}\} & 1 \\
   2 &\{\{n-2,3,1^{n-3}\},\{n-3,2,2,1^{n-3}\}, \{n-1,2,1^{n-3}\}^*\} & 2 \\
    & \multirow{2}{3in}{
       $\{\{n-1,2,1^{n-4}\}, \{n-2,3,1^{n-4}\}, \{n-3,4,1^{n-4}\},$ $\{n-4,3,2,1^{n-4}\}, \{n-5,2,2,2,1^{n-4}\}\}$} & \\
   3 && 5 \\ \\ \bottomrule
\end{array}
$$
    \caption{The collected cases where $f_\lambda \geq 2n-2i$ for the proof of Proposition \ref{prop:multAlmostHalf}. The partition marked $*$ does correspond to a numerical semigroup, so is not counted as an exceptional case.}
    \label{tab:Aexceptions}
\end{table}
\end{proof}

\begin{remark} \label{remark:mult}
By following a similar structure, we could find $a(n+1)-\countingFuncMult(2n-i,n-i)$ for any specific $i$. However, it is an open problem to find a specific function $h(i)$ such that $a(n+1)-\countingFuncMult(2n-i,n-i)=h(i)$ for all possible values of $i$. 
\end{remark}




 \begin{lemma}\label{Lemma:multInjection}
For positive integers $j$ and $n$ such that $n\le 3j$, the map \[\phi:\setMult(n,j)\rightarrow\setMult(n+2,j+1\mid\text{first two parts distinct})\] 
defined by 
\[
\mu=\{\mu_1,\mu_2, \ldots, \mu_k, 1^{j-1}\}\mapsto\{\mu_1+1, \mu_2, \ldots, \mu_k,1^j\}
\]
is bijective.
\end{lemma}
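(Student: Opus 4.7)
The plan is to check the four defining properties. Injectivity of $\phi$ is immediate, since the formula allows one to recover $\mu$ from $\phi(\mu)$ by subtracting $1$ from the first part and deleting one part of size $1$. The size and multiplicity checks are direct: $|\phi(\mu)| = n + 2$, and $\phi(\mu)$ has $j$ parts of size $1$, so its multiplicity is $j+1$; the first two parts of $\phi(\mu)$ are distinct because $\mu_1 + 1 > \mu_1 \geq \mu_2$ (reading $\mu_2 = 1$ when $k = 1$). The same formula, run in reverse, provides the inverse map on the codomain; checking that it lands in $\setMult(n,j)$ gives surjectivity.

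The nontrivial content is the semigroup property in both directions. Using the first-column hook description, $G_\mu = \{1, \ldots, j-1\} \cup \{h_i : 1 \leq i \leq k\}$ with $h_i = \mu_i + k + j - 1 - i$, so $F_\mu = h_1$. A direct hook/profile comparison yields the correspondence: for $s \in [j+1, F_\mu]$, one has $s \in S_{\phi(\mu)}$ iff $s - 1 \in S_\mu$, together with a \emph{new} element $F_\mu + 1 \in S_{\phi(\mu)}$ (from the cell added at the end of the first row) and $F_{\phi(\mu)} = F_\mu + 2$. To verify closure of $S_{\phi(\mu)}$, take $a, b \geq j+1$ in $S_{\phi(\mu)}$: the subcases $a + b \geq F_\mu + 3$ and $a + b = F_\mu + 1$ are immediate; $a + b = F_\mu + 2$ is ruled out because both $a, b \in [j+1, F_\mu]$ would then force $(a-1) + (b-1) = F_\mu \in S_\mu$, contradicting $F_\mu \notin S_\mu$. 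The remaining subcase $a + b \leq F_\mu$ reduces via the correspondence to the claim $a + b - 1 \neq h_i$ for every $i$.

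The hypothesis $n \leq 3j$ enters here: for $i = 1$ we use $a + b \leq F_\mu$, and for $i \geq 2$, since $\mu_1 + \cdots + \mu_k \leq 2j + 1$ and each $\mu_i \geq 2$, we get $2\mu_2 \leq \mu_1 + \mu_2 \leq 2j + 1 - 2(k-2)$, hence $\mu_2 + k \leq j + 2$ and $h_i \leq h_2 \leq 2j - 1 < 2j + 1 \leq a + b - 1$. The reverse direction uses the same bound: for $\nu$ in the codomain and $a, b \in S_{\phi^{-1}(\nu)} \setminus \{0\}$, a putative failure $a + b = h_i \in G_{\phi^{-1}(\nu)}$ is impossible for $i \geq 2$ by the same inequality, and impossible for $i = 1$ because then $(a+1) + (b+1) = F_{\phi^{-1}(\nu)} + 2 = F_\nu \notin S_\nu$ would contradict closure of $S_\nu$. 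The main obstacle I anticipate is the careful bookkeeping around the new label $F_\mu + 1$ (where the correspondence is nonstandard) and the edge case $\nu_1 = 2$ in the inverse direction, which only arises when $n = j$ and leaves both sides of the bijection empty.
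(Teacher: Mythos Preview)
Your argument is correct and follows the same route as the paper. Both proofs hinge on the observation that every gap of $S_\mu$ other than $F_\mu$ lies strictly below $2m_\mu=2j$ (and likewise for $S_{\phi(\mu)}$), so the only possible closure failure is at the Frobenius number, and that failure transfers between the two numerical sets via the shift $a\leftrightarrow a+1$. The paper obtains this bound by first splitting off the easy case $F_\mu<2m_\mu$ and then, when $F_\mu>2m_\mu$, estimating the inner partition $|\pi|=n-F_\mu<j$ to get $\max\bigl(\mathbb{N}\setminus(S_\mu\cup\{F_\mu\})\bigr)\le m_\mu+|\pi|<2m_\mu$. You reach the equivalent inequality $h_2\le 2j-1$ directly from $\sum_{i\le k}\mu_i\le 2j+1$, which has the minor advantage of avoiding the case split. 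The endgame (reducing to $(a-1)+(b-1)=F_\mu$ in one direction and $(a+1)+(b+1)=F_{\phi(\mu)}$ in the other) is identical in both proofs.

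One correction to your closing remark: when $n=j$ the domain $\setMult(j,j)$ is indeed empty, but the codomain is not---it contains $\{2,1^{j}\}$, which corresponds to the semigroup $\{0,\,j+1,\,j+3,\to\}$. Symmetrically, when $n=j-1$ the domain is $\{\{1^{j-1}\}\}$ while the codomain is empty. So the lemma as stated actually requires the implicit hypothesis $n\ge j+1$ (equivalently $k\ge 1$); all applications in the paper satisfy this, and the paper's own proof glosses over the same point.
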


\begin{proof}
Fix integers $j,n$ such that $n\le 3j$. Define the map \[\phi:\setMult(n,j)\rightarrow\setMult(n+2,j+1\mid\text{first two parts distinct})\] 
 by 
\[
\mu=\{\mu_1,\mu_2, \ldots, \mu_k, 1^{j-1}\}\mapsto\{\mu_1+1, \mu_2, \ldots, \mu_k,1^j\}.
\] Note that if $\lambda=\{\lambda_1,\lambda_2,\ldots,\lambda_j,1^j\}\vdash n$ with $\lambda_1\ne \lambda_2$, we can define $\phi^{-1}(\lambda)=\{\lambda_1-1,\lambda_2,\ldots,\lambda_j,1^{j-1}\}$. Thus, the bijectivity of $\phi$ as a map between sets of partitions is straightforward.

Define $\mathcal{P}_a(n,j)$ to be the set of partitions of size $n$ with $j-1$ parts of size 1 and $\mathcal{P}_b(n,j)$ to be the subset of $\mathcal{P}_a(n,j)$ where the first two parts of the partition are distinct. Let $\mu \in \mathcal{P}_a(n,j) $ and $ \lambda \in \mathcal{P}_b(n+2,j+1)$ such that $\phi(\mu)=\lambda$. We will show that $S_\lambda$ is a numerical semigroup if and only if $S_\mu$ is a numerical semigroup.

Note that $|\lambda|=|\mu|+2$, $m_\lambda=j=m_\mu+1$, and $F_\lambda=F_\mu+2$. Thus, 
\[
F_\mu<2m_\mu \iff F_\lambda<2m_\lambda.
\]
Since all partitions $\mu$ such that $F_\mu<2m_\mu$ correspond to numerical semigroups,  we see that when $F_\mu<2m_\mu$ or $F_\lambda<2m_\lambda$, both $\mu$ and $\lambda$ are partitions corresponding to numerical semigroups. 

Thus, we now assume that $F_\mu>2m_\mu$ and $F_\lambda>2m_\lambda$. Since $F_\mu\le |\mu|=n$ and $n\le 3j=3m_\mu$, we know that $F_\mu\le 3m_\mu$. Similarly, $F_\lambda\le 3m_\lambda-1$. 

Let $k$ be the number of parts of size larger than one in $\mu$. By the definition of $\phi$, $k$ is also the number of parts of size larger than one in $\lambda$.  Thus, because
\[
F_{\mu}=(m_{\mu}-1)+k+\mu_1-1>2m_{\mu},
\]
 we know $k+\mu_1-1>m_{\mu}+1$.
Define $\pi$ to be the partition obtained after removing the largest hook from $\mu$ (or, equivalently, from $\lambda$). Then,
\begin{align*}
    |\pi|&=|\mu|-F_{\mu}\\
    &=n-(m_\mu-1)-k-(\mu_1-1)\\\
    &<n+1-m_\mu-(m_\mu+1)\\
    &=n-2m_\mu \le j=m_\mu.
\end{align*}
Consider $$\max\left(\N\setminus(S_\mu\cup\{F_\mu\})\right)\le m_\mu+|\pi|<2m_\mu$$
and
$$\max\left(\N\setminus(S_\lambda\cup\{F_\lambda\})\right)\le m_\lambda+|\pi|<2m_\lambda.$$ Thus, $F_\mu$ (resp. $F_\lambda$) is the only element of  $\N\setminus S_\mu$ (resp. $\N \setminus S_\lambda$) that is $\ge 2m_\mu$ (resp. $2m_\lambda$). 
Finally, note that $a,b\in S_\mu$ with $a+b=F_\mu$ if and only if $a+1,b+1\in S_\lambda$ with $a+b+2=F_\lambda$, meaning that $S_\mu$ is not a numerical semigroup if and only if $S_\lambda$ is not a numerical semigroup, completing our proof.
\end{proof}

Let $\bar{A}(n)$ be the set of partitions of $n$ with at least two parts with no parts of size $1$ and where $\lambda_1=\lambda_2$ and $\bar{a}(n):=|\bar{A}(n)|$. 
 
\begin{theorem}\label{Thm:multRec1}
For $j\in \N$ and $n\ge \text 2j+2$, 
$$\countingFuncMult(2n-j,n-j)-\countingFuncMult(2(n-1)-j,(n-1)-j)=\noOnesFunc(n+1).$$
\end{theorem}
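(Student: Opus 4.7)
The plan is to combine Lemma \ref{Lemma:multInjection} with a stripping-ones bijection. First I would invoke that lemma with parameters $n' = 2n-2-j$ and $j' = n-1-j$; the hypothesis $n \ge 2j+2$ gives $n' \le 3j'$, so the lemma delivers a bijection
\[
\setMult(2n-2-j,\, n-1-j) \;\longleftrightarrow\; \setMult(2n-j,\, n-j \mid \text{first two parts distinct}).
\]
This identifies $\countingFuncMult(2n-2-j,n-1-j)$ with the number of $\lambda \in \setMult(2n-j,n-j)$ having $\lambda_1 \ne \lambda_2$, so the desired difference reduces to counting the partitions $\lambda \in \setMult(2n-j,n-j)$ with $\lambda_1 = \lambda_2$.

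I would then biject this set with $\bar{A}(n+1)$ by deleting all parts of size $1$. Given such a $\lambda$, the resulting partition $\mu$ has size $(2n-j)-(n-j-1) = n+1$, no parts of size $1$, and inherits $\mu_1 = \mu_2 \ge 2$ (forcing $\ell(\mu) \ge 2$), placing $\mu$ in $\bar{A}(n+1)$. The inverse appends $n-j-1$ copies of $1$ to any $\mu \in \bar{A}(n+1)$, automatically yielding a partition of size $2n-j$, multiplicity $n-j$, and first two parts equal.

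The main obstacle will be verifying that the reconstructed $\lambda$ really corresponds to a numerical semigroup; the rest is bookkeeping. The intended approach is to prove $F_\lambda < 2 m_\lambda$ and apply the sufficient condition from Section \ref{sec:Preliminaries}. Reading the largest hook length from the Ferrers diagram of $\lambda$ gives
\[
F_\lambda = \mu_1 + \ell(\mu) + n - j - 2.
\]
Since $\mu_1 = \mu_2 \ge 2$ and every remaining part of $\mu$ is also at least $2$, the size constraint yields $n+1 \ge 2\mu_1 + 2(\ell(\mu)-2)$, hence $\mu_1 + \ell(\mu) \le \tfrac{n+5}{2}$. Substituting produces $F_\lambda \le \tfrac{3n+1}{2} - j$, and a short comparison with $2m_\lambda = 2(n-j)$ shows $F_\lambda < 2m_\lambda$ precisely when $n \ge 2j+2$, matching the given hypothesis. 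This will complete the verification and establish the bijection.
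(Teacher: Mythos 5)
Your proposal is correct and is essentially the paper's own argument: the paper packages the two steps into a single map $\phi$ on $\setMult(2n-j,n-j)$ that applies the inverse of Lemma \ref{Lemma:multInjection} when $\lambda_1>\lambda_2$ and strips the parts of size $1$ when $\lambda_1=\lambda_2$, with the same hook-length estimate $\mu_1+\ell(\mu)\le\frac{1}{2}(n+5)$ giving $F_\lambda<2m_\lambda$ under $n\ge 2j+2$. Your separation into two independent bijections is only a cosmetic repackaging of that proof.
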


\begin{proof}
We define a bijection 
$$\phi:\setMult(2n-j,n-j)\to \setMult(2(n-1)-j,(n-1)-j)\cup \noOnesSet(n+1)$$
by 
$$\lambda\mapsto \begin{cases}\lambda\setminus\{1^{n-j-1}\} &\text{if $\lambda_1=\lambda_2$}\\
\{\lambda_1-1,\lambda_2,\lambda_3,\ldots, \lambda_k,1^{n-j-2}\} &\text{if $\lambda_1>\lambda_2$.}
\end{cases}$$
 
To show that $\phi$ is, in fact, a bijection, we will show the following four claims:
\begin{enumerate}
    \item For any $\lambda \in \setMult(2n-j,n-j)$ with $\lambda_1 = \lambda_2$, $\lambda \setminus \{1^{n-j-1}\} \in \noOnesSet(n+1)$.
    \item For any $\lambda \in\setMult(2n-j,n-j)$ with $\lambda_1 > \lambda_2$, $\{\lambda_1-1,\lambda_2,\lambda_3,\ldots, \lambda_k,1^{n-j-2}\} \in \setMult(2(n-1)-j,(n-1)-j).$
    \item For any $\mu \in \noOnesSet(n+1)$, $\mu \cup  \{1^{n-j-1}\}  \in \setMult(2n-j,n-j)$.
    \item For any $\mu \in \setMult(2(n-1)-j,(n-1)-j),$ $\{\mu_1 +1, \mu_2, \mu_3, \ldots, \mu_k, 1^{n-j-1}\}\in \setMult(2n-j,n-j).$
\end{enumerate}
Note that, Claims 1 and 2 show that $\phi$ is well-defined. Then, we notice that, if $\mu\in \noOnesSet(n+1)$, $\phi(\mu\cup\{1^{n-j-1}\})=\mu$. Furthermore, if $\mu=\{\mu_1,\mu_2,\ldots,\mu_k,1^{n-j-2}\}\in \setMult(2(n-1)-j,(n-1)-j)$, then $\phi(\{\mu_1+1,\mu_2,\ldots,\mu_k,1^{n-j-1}\})=\mu$. Therefore, Claims 3 and 4 show that $\phi^{-1}$ is well-defined.

To prove Claims 1-4, we begin by noting that all $\lambda \in\setMult(2n-j,n-j)$ have exactly $n-j-1$ parts of size 1, so we can write such $\lambda$ in the form $\{\lambda_1, \lambda_2, \ldots, \lambda_k, 1^{n-j-1}\}$. 

\noindent {\bf Proof of 1:} Let $\lambda \in \setMult(2n-j,n-j)$ with $\lambda_1 = \lambda_2$. Then, $\mu=\lambda \setminus \{1^{n-j-1}\}$ has no parts of size $1$ and $\mu_1=\mu_2$, so $\lambda \setminus \{1^{n-j-1}\} \in \noOnesSet(n+1)$.

\noindent {\bf Proof of 2:} Note that, when $n=2j+2$, $2(n-1)-j=3j$ and $3((n-1)-j)=3j+4$. Furthermore, if we increase $n$ by $1$, $2(n-1)-j$ increases by $2$ and $3((n-1)-j)$ increases by $3$, so $2(n-1)-j\le 3((n-1)-j)$ for any $n\ge 2j+2$. Therefore, this claim follows directly from Lemma \ref{Lemma:multInjection}. 

\noindent {\bf Proof of 3:} Let $\lambda\in\mathcal{\bar{A}}(n+1)$ and let $\ell$ be the length of $\lambda$. By considering the largest hook length of $\lambda$, we get
\[
\lambda_1+\ell -1\le (n+1)-(\lambda_1-1)-(\ell-2),
\]
which gives $\lambda_1+\ell\le\frac{1}{2}(n+5)$. Consider  $\mu=\lambda\cup\{1^{n-j-1}\}$, and note that $m_{\mu} = n-j$.
Using the definition of hook length, we get
\begin{align*}
   F_{\mu}&=(\lambda_1-1)+\ell+(n-j-1) \\
   &\le \frac{1}{2}n+(n-j)+1<2m_{\mu},
\end{align*}
where the last inequality comes from the fact that $j\le \frac{1}{2}n-1$.

\noindent {\bf Proof of 4:} This claim follows from Lemma \ref{Lemma:multInjection}. 
\end{proof}
By combining Theorem \ref{prop:multAlmostHalf} with the case $j=0$ of Theorem \ref{Thm:multRec1} and separately confirming the cases with $1\le n <5$, we obtain the following identity relating $a(n)$ and $\overline{a}(n)$.
\begin{cor}\label{cor:aabaridentity}
    For $n\in \N$, $$a(n+1)-a(n)=\overline{a}(n+1).$$
\end{cor}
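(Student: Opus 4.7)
The plan is to derive the identity by combining the $j=0$ case of Theorem \ref{Thm:multRec1} with equation \eqref{PNm2n} of Theorem \ref{prop:multAlmostHalf}, and then separately handling the small values of $n$ where the hypotheses of Theorem \ref{prop:multAlmostHalf} fail.

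Setting $j=0$ in Theorem \ref{Thm:multRec1}, the hypothesis $n \ge 2j+2$ becomes $n \ge 2$, and the recurrence reads
$$\countingFuncMult(2n,n) - \countingFuncMult(2(n-1), n-1) = \overline{a}(n+1).$$
Meanwhile, \eqref{PNm2n} of Theorem \ref{prop:multAlmostHalf} asserts $\countingFuncMult(2n,n) = a(n+1)$ for every $n \ge 5$. Applying this identity both at $n$ and at $n-1$ (the latter requires $n-1 \ge 5$, i.e., $n \ge 6$) substitutes directly into the recurrence to yield
$$a(n+1) - a(n) = \overline{a}(n+1) \quad \text{for all } n \ge 6.$$

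For the boundary range $1 \le n \le 5$, I would simply enumerate the relevant partitions. For example, $a(1)=a(2)=a(3)=0$, $a(4)=a(5)=1$, $a(6)=3$, while $\overline{a}(2)=\overline{a}(3)=\overline{a}(5)=0$, $\overline{a}(4)=1$, and $\overline{a}(6)=2$; a line-by-line check confirms $a(n+1)-a(n)=\overline{a}(n+1)$ in each of these cases.

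There is no substantive obstacle here: once the two earlier results are in hand, the corollary is essentially a substitution, and the only mild care required is to track the ranges of validity so that the small-$n$ cases are treated by direct inspection rather than by appeal to Theorem \ref{prop:multAlmostHalf}.
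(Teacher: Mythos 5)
Your proposal is correct and follows essentially the same route as the paper, which likewise obtains the identity by combining the $j=0$ case of Theorem \ref{Thm:multRec1} with Theorem \ref{prop:multAlmostHalf} and checking the small values of $n$ directly; your bookkeeping of the ranges (needing $n\ge 6$ for the substitution, hence verifying $1\le n\le 5$ by hand) and the listed values of $a$ and $\overline{a}$ are accurate.
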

\noindent Note that Corollary \ref{cor:aabaridentity} can also be proved combinatorially by noting that the partitions in $\mathcal{A}(n+1)\setminus\noOnesSet(n+1)$ are exactly those where it is possible to subtract one from the largest part to obtain a partition in $\mathcal{A}(n)$. 

We now extend the recurrence from Theorem \ref{Thm:multRec1} to the case where $n = 2j+1$, and subsequently the case $n = 2j$. The two results combined show that the restriction on $n$ in Theorem \ref{Thm:multRec1} is the best possible. 

 \begin{theorem}
For $j\geq 4$, 
$$\countingFuncMult(3j+2,j+1)-\countingFuncMult(3j,j)=\noOnesFunc(2j+2)-j.$$
\end{theorem}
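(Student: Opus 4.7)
The plan is to mirror the proof of Theorem \ref{Thm:multRec1} in the boundary case $n=2j+1$, which falls just outside its range $n\ge 2j+2$, and to identify exactly where the argument needs a correction term.

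First, partition $\setMult(3j+2,j+1)$ as a disjoint union $X\sqcup Y$, where $X$ is the set of partitions with $\lambda_1=\lambda_2$ and $Y$ is the set with $\lambda_1>\lambda_2$. For $Y$, apply Lemma \ref{Lemma:multInjection} with parameters $n=3j$ and multiplicity $j$; the hypothesis $n\le 3j$ holds with equality, so the lemma produces a bijection $\setMult(3j,j)\to Y$ and hence $|Y|=\countingFuncMult(3j,j)$.

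For $X$, consider the natural injection $\lambda\mapsto \mu:=\lambda\setminus\{1^{j}\}$ into $\noOnesSet(2j+2)$. In the inverse direction, given $\mu\in\noOnesSet(2j+2)$ set $\nu=\mu\cup\{1^{j}\}$; then $m_\nu=j+1$, and reading the hook length at $(1,1)$ gives $F_\nu=\mu_1+\ell(\mu)+j-1$. Rerunning the hook-length bound from Proof of Claim 3 of Theorem \ref{Thm:multRec1} with $n+1=2j+2$ yields $\mu_1+\ell(\mu)\le j+3$, so $F_\nu\le 2m_\nu$, and whenever this inequality is strict $\nu$ corresponds to a numerical semigroup by the second bullet of the preliminaries.

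The main task is therefore to handle the equality case $\mu_1+\ell(\mu)=j+3$. Equality in the hook-length bound forces the hook at cell $(2,2)$ of $\mu$ to exhaust the complement of the largest hook; combined with $\mu_1=\mu_2$ and the absence of parts of size $1$, this pins down $\mu=(\mu_1,\mu_1,2^{\ell(\mu)-2})$ with $2\le \mu_1\le j+1$, giving exactly $j$ exceptional partitions of $2j+2$. For each such $\mu$ we have $F_\nu=2m_\nu$, so $m_\nu+m_\nu=F_\nu\notin S_\nu$ and $S_\nu$ fails to be a numerical semigroup. Hence $|X|=\noOnesFunc(2j+2)-j$, and combining with the count for $Y$ yields the claimed identity. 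The main obstacle is the explicit enumeration of the equality case and the verification that every one of those $\nu$ fails the semigroup condition; once that is settled the rest of the argument is exactly parallel to Theorem \ref{Thm:multRec1}.
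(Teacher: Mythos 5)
Your proposal is correct and follows essentially the same route as the paper's proof: the same split into $\lambda_1=\lambda_2$ versus $\lambda_1>\lambda_2$, the same appeal to Lemma \ref{Lemma:multInjection} at its boundary $n=3j$, and the same $j$ exceptional partitions $\{s,s,2^{j-s+1}\}$ with $F=2m$. The only (cosmetic, and arguably cleaner) difference is that you derive the exceptional set as the equality case of the hook-length bound, whereas the paper posits the set $\mathcal{I}$ up front and verifies it.
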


\begin{proof}
Consider the set $\mathcal{A}= \noOnesSet(2j+2) \setminus \mathcal{I}$, where $\mathcal{I} = \left \{ \{s,s, 2^{j-s+1}\}\ |\ 2\leq s\leq j+1\right \}$. Note that 
$|\mathcal{I}|=j$, so our goal is to prove that $\setMult(3j+2,j+1)$ is equinumerous with $\setMult(3j,j)\cup \mathcal{A}$.  To that end, we will prove that the map
$$\phi:\setMult(3j+2, j+1) \to \setMult(3j,j)\cup \mathcal{A}$$
defined by 
$$\lambda\mapsto \begin{cases}\lambda\setminus\{1^{j}\} &\text{if $\lambda_1=\lambda_2$}\\
\{\lambda_1-1,\lambda_2,\lambda_3,\ldots, \lambda_k, 1^{j-1}\} &\text{if $\lambda_1>\lambda_2$}
\end{cases}$$ is a bijection.
To show that $\phi$ is, in fact, a bijection, we will show the following four claims:
\begin{enumerate}
    \item For $\lambda\in\setMult(3j+2,j+1)$ with $\lambda_1=\lambda_2$, $\lambda\setminus\{1^{j}\}\in \mathcal{A}$.
    \item For $\lambda\in\setMult(3j+2,j+1)$ with $\lambda_1>\lambda_2$, $\{\lambda_1-1,\lambda_2,\lambda_3,\ldots, \lambda_k, 1^{j-1}\}\in\setMult(3j,j)$.
    \item For $\lambda\in \mathcal{A}$, $\lambda\cup\{1^{j}\}\in\setMult(3j+2,j+1)$ with $\lambda_1=\lambda_2$.
    \item For $\lambda\in \setMult(3j,j)$, $\{\lambda_1+1,\lambda_2,\lambda_3,\ldots, \lambda_k, 1^{j-1}\}\in\setMult(3j+2,j+1)$.
\end{enumerate}
Note that, for $2\le s\le j+1$,  $\lambda=\{s,s,2^{j-s+1}, 1^{j}\}\not\in\setMult(3j+2,j+1)$ $m_\lambda=j+1$ and $F_\lambda=2(j+1)$. Claims 1 and 2 show that $\phi$ is well-defined and claims 3 and 4 show that $\phi^{-1}$ is well-defined. 

\noindent {\bf Proof of 1:} Let $\lambda\in\setMult(3j+2,j+1)$ with $\lambda_1=\lambda_2$. Then, $\lambda\setminus\{1^j\}$ has no part of size $1$, is of size $(3j+2)-j=2j+2$, and the first two parts of $\lambda$ are the same, so 
$\lambda\setminus\{1^{j}\}\in \mathcal{A}$.

\noindent {\bf Proof of 2:} This claim follows from Lemma \ref{Lemma:multInjection}.

\noindent {\bf Proof of 3:} Let $\lambda=(\lambda_1,\lambda_2,\ldots,\lambda_k)\in \mathcal{A}$ with $\lambda_1=\lambda_2$. Let $\mu=\lambda\cup\{1^j\}$. Then, we have $|\mu|=|\lambda|+j=(2j+2)+j=3j+2$ and $m_\mu=j+1$. Since $\lambda_1=\lambda_2$ and $\lambda_i\ge 2$ for all $i$, we get $|\lambda|\ge 2(\lambda_1-1)+2k$, which implies that $\lambda_1-1+k\le j+1$. Therefore,
\[
F_\mu=j+k+(\lambda_1-1)\le 2j+1=2m_\mu-1,
\]
so $S_\mu$ is a numerical semigroup.

\noindent {\bf Proof of 4:} This claim follows from Lemma \ref{Lemma:multInjection}.
 \end{proof}
 
 For the proof below, the statement does not meet the conditions of Lemma \ref{Lemma:multInjection}. However, the argument in the third and sixth claims is very similar to that of Lemma \ref{Lemma:multInjection}. 
\begin{theorem}
For $j\geq 4$, 
$$\countingFuncMult(3j,j)-\countingFuncMult(3j-2,j-1)=\noOnesFunc(2j+1)+1.$$
\end{theorem}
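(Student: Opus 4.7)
The plan is to construct an explicit bijection
\[
\phi \colon \setMult(3j,j) \longrightarrow \setMult(3j-2,j-1) \,\cup\, \noOnesSet(2j+1) \,\cup\, \{\{3, 2^{j-1}\}\},
\]
where the three pieces on the right are pairwise disjoint (the distinguished partition $\{3,2^{j-1}\}$ lies in neither of the other two pieces because its first two parts differ and it contains no $1$s). Counting sizes will then give the identity. As in Theorem \ref{Thm:multRec1}, the natural candidate for $\phi$ is to strip all the ones when $\lambda_1 = \lambda_2$ and otherwise to subtract one from $\lambda_1$ and drop a one. The obstruction is that applying Lemma \ref{Lemma:multInjection} in this direction would require $3j-2 \le 3(j-1)$, which fails by one, so on a small family the naive rules must be rerouted.

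Singling out the exceptional family
\[
\mathcal{F} \;=\; \bigl\{\{j+3-k,\, j+2-k,\, 2^{k-2},\, 1^{j-1}\} : 3 \le k \le j\bigr\},
\]
a hook-length check shows $\mathcal{F} \subseteq \setMult(3j,j)$: each $\lambda \in \mathcal{F}$ satisfies $F_\lambda = 2j+1 = 2m_\lambda + 1$ and has $\lambda_k = 2$, and $\lambda_k = 2$ forces $j+1$ into the gap set of $\lambda$, so no sum $x+y = F_\lambda$ with $x,y \in S_\lambda \setminus \{0\}$ arises and $S_\lambda$ is closed. Then define $\phi$ piecewise: $\phi(\lambda) = \lambda \setminus \{1^{j-1}\}$ when $\lambda_1 = \lambda_2$; $\phi(\lambda) = \{j+2-k,\, j+2-k,\, 3,\, 2^{k-3}\}$ when $\lambda \in \mathcal{F}$ with $3 \le k \le j-1$; $\phi(\{3, 2^{j-1}, 1^{j-1}\}) = \{3, 2^{j-1}\}$ for the remaining $k=j$ element of $\mathcal{F}$; and $\phi(\lambda) = \{\lambda_1 - 1, \lambda_2, \ldots, \lambda_r, 1^{j-2}\}$ for the remaining $\lambda$ with $\lambda_1 > \lambda_2$, where $r$ denotes the number of parts of $\lambda$ of size at least $2$.

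The proof then mirrors the four-claim structure of Theorem \ref{Thm:multRec1}. For the first branch one shows that, for $\mu \in \noOnesSet(2j+1)$, the partition $\mu \cup \{1^{j-1}\}$ lies in $\setMult(3j,j)$ iff $\mu_1 + \ell(\mu) \le j+1$; the boundary $\mu_1 + \ell(\mu) = j+2$ forces $F = 2m$ on the resulting $\lambda$, and the ``bad'' $\mu$ in this boundary are exactly $\{\{s,s,3,2^{j-s-1}\} : 3 \le s \le j-1\}$, which are the images of $\{\lambda \in \mathcal{F} : k \le j-1\}$ under the second rule. For the generic fourth branch, Lemma \ref{Lemma:multInjection} applies verbatim except when $T_\mu := \mu_1 + r = j+2$; there the lemma's inequality $|\pi| < m$ still holds on the $\lambda$-side ($|\pi_\lambda| = j-1 < j = m_\lambda$) but fails on the $\mu$-side ($|\pi_\mu| = j-1 = m_\mu$). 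Direct inspection in this boundary shows that $S_\mu$ is a numerical semigroup exactly when $\mu_1 > \mu_2$ and $\mu_r = 2$, and the excluded $\mu$ with $\mu_1 = \mu_2$ correspond under the inverse subtract-from-$\mu_1$ map precisely to the elements of $\mathcal{F}$ already rerouted.

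The main obstacle is this boundary bookkeeping: verifying that the generic rule's failures align exactly with the first rule's non-surjectivity, together with the single leftover $\{3,2^{j-1},1^{j-1}\}$ that produces the ``$+1$''. Once these claims are in hand, summing gives
\[
\countingFuncMult(3j,j) \;=\; [\noOnesFunc(2j+1) - (j-3)] + (j-2) + \countingFuncMult(3j-2,j-1) \;=\; \countingFuncMult(3j-2,j-1) + \noOnesFunc(2j+1) + 1,
\]
as claimed.
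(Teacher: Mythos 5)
Your construction is essentially the paper's own proof: the same three-branch map (strip the $1$'s when $\lambda_1=\lambda_2$, reroute the exceptional family $\{\lambda_1,\lambda_1-1,2^{*},1^{j-1}\}$ to $\{\lambda_1-1,\lambda_1-1,3,2^{*}\}$, and subtract one from $\lambda_1$ otherwise), with the same exceptional family and the same boundary analysis at $F=2m\pm 1$; the only cosmetic difference is that you absorb $(3,2^{j-1},1^{j-1})$ by adjoining the singleton $\{3,2^{j-1}\}$ to the codomain, where the paper deletes it from the domain and accounts for the $+1$ at the end. The claims you defer to ``direct inspection'' (the bad boundary $\mu$'s being exactly $\{s,s,3,2^{j-s-1}\}$, and semigroup-ness in the $\mu_1+r=j+2$ boundary holding iff $\mu_1>\mu_2$ and $\mu_r=2$) check out and correspond to the paper's Claims 2--6.
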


\begin{proof}
Consider the set $\mathcal{M}= \setMult(3j,j) \setminus (3, 2^{j-1}, 1^{j-1})$. We note that, because all $\lambda \in\mathcal{M} $ have corresponding numerical set with multiplicity $j$, they each have exactly $j-1$ parts of size 1. Thus, we can write each $\lambda\in \mathcal{M}$ in the form $\{\lambda_1, \lambda_2, \ldots, \lambda_k, 1^{j-1}\}$. We define a map
$$\phi:\mathcal{M} \to \setMult(3j-2,j-1)\cup \noOnesSet(2j+1)$$
by 
$$\lambda\mapsto \begin{cases}\lambda\setminus\{1^{j-1}\} &\text{if $\lambda_1=\lambda_2$}\\
 \{\lambda_1-1,\lambda_1-1,3,2^{j-\lambda_1}\} &\text{if $\lambda=\{\lambda_1,\lambda_1-1,2^{j+1-\lambda_1},1^{j-1}\}$.}\\
 \{ \lambda_1-1, \lambda_2, \lambda_3, \ldots, \lambda_k, 1^{j-2}\} & \text{otherwise}.
 \end{cases}$$
To show that $\phi$ is a bijection, we will show the following six claims:
\begin{enumerate}
    \item For $\lambda \in \mathcal{M}$ with $\lambda_1=\lambda_2$, $\lambda\setminus\{1^{j-1}\}\in\noOnesSet(2j+1)$.
    \item For $\lambda=\{\lambda_1,\lambda_1-1,2^{j+1-\lambda_1}, 1^{j-1}\} \in \mathcal{M}$, $\{\lambda_1-1,\lambda_1-1,3,2^{j-\lambda_1}\}\in\noOnesSet(2j+1)$ and $\{\lambda_1-1,\lambda_1-1,3,2^{j-\lambda_1}\}\ne \tilde\lambda\setminus\{1^{j-1}\}$ for any $\tilde\lambda\in \mathcal{M}$. 
    \item For $\lambda \in \mathcal{M}$ with $\lambda_1>\lambda_2+1$ or $\lambda_1=\lambda_2+1$ and $\lambda_3\neq 2$, $\{ \lambda_1-1, \lambda_2, \lambda_3, \ldots,\lambda_k, 1^{j-2}\}\in\setMult(3j-2,j-1)$.
    \item For $\mu\in\noOnesSet(2j+1)$ such that $F_\mu\ne j+1$,
    $\mu\cup\{1^{j-1}\}\in\mathcal{M}$.
    \item For $\mu\in \noOnesSet(2j+1)$ with $F_\mu=j+1$,  $\{\mu_1+1,\mu_2,\mu_3-1,\ldots,\mu_k,1^{j-1}\}\in\mathcal{M}$. 
    \item For $\mu\in\setMult(3j-2,j-1)$, $\{\mu_1+1, \mu_2, \mu_3, \ldots, \mu_k, 1^{j-1}\}\in\mathcal{M}$ with $\mu_1>\mu_2$ or $\mu_1=\mu_2$ and $\mu_3\neq 2$.
\end{enumerate}

\noindent {\bf Proof of 1:} Let $\lambda\in \mathcal{M}$ such that $\lambda_1=\lambda_2$. Since $\lambda$ has exactly $j-1$ parts of size $1$, $\lambda\setminus\{1^{j-1}\}$ has no parts of size $1$. Furthermore, since $\lambda_1=\lambda_2$, $\lambda\setminus\{1^{j-1}\}$ has at least two appearances of its largest part, proving that $\lambda\setminus\{1^{j-1}\}\in\noOnesSet(2j+1)$.

\noindent {\bf Proof of 2:} Let $\lambda=\{\lambda_1,\lambda_1-1,2^{j+1-\lambda_1}, 1^{j-1}\} \in \mathcal{M}$. Note that, the only partition in $\setMult(3j,j)$ with $\lambda_2=2$ and $\lambda_1=\lambda_2+1=3$ is $\{3,2^{j-1},1^{j-1}\}$, so we may assume that $\lambda_1-1=\lambda_2>2$. Therefore $\{\lambda_1-1,\lambda_1-1,3,2^{j-\lambda_1}\}$ is a partition with a repeated largest part and no parts of size $1$. Furthermore, since $|\lambda|=3j$ and $(\lambda_1-1)+(\lambda_1-1)+3+2\cdot(j-\lambda_1)=|\lambda|-(j-1)$, $|\{\lambda_1-1,\lambda_1-1,3,2^{j-\lambda_1}\}|=2j+1$, making $\{\lambda_1-1,\lambda_1-1,3,2^{j-\lambda_1}\}\in\noOnesSet(2j+1)$. 

It
 remains to show that $\{\lambda_1-1,\lambda_1-1,\lambda_3,2^{j-\lambda_1},1^{j-1}\}\not\in \mathcal{M}$. Define $\tilde{\lambda}=\{\lambda_1-1,\lambda_1-1,\lambda_3,2^{j-\lambda_1},1^{j-1}\}$ and note that $F_{\tilde{\lambda}}=\lambda_1-1+2+(j-\lambda_1)+j-1=2j$. However, $m_{\tilde{\lambda}}=j$, so $S_{\tilde{\lambda}}$ is not closed under addition and, thus, $\tilde\lambda\not\in\mathcal{M}$, as desired. 

\noindent {\bf Proof of 3:}
 Let $\lambda\in \mathcal{M}$ such that $\lambda_1> \lambda_2+1$ or $\lambda_1=\lambda_2+1$ and $\lambda_3\neq 2$. 
Define $\mu=\{\lambda_1-1,\lambda_2,\lambda_3,\ldots,\lambda_{k}, 1^{j-2}\}$. We will show that $S_{\mu}$ is a numerical semigroup. 

First, note that because $F_\lambda=F_\mu+2$ and $m_\lambda=m_\mu+1$, $$F_\lambda<2m_\lambda \iff F_\mu<2m_\mu.$$ Thus, we can focus on the case where $F_\lambda>2m_\lambda.$ Define $\pi$ to be the partition obtained by removing the largest hook from $\lambda$ (or, equivalently, from $\mu$). Then 
\begin{align*}
    |\pi|&=|\lambda|-F_{\lambda}\\
    &<|\lambda|-2m_\lambda\\
    &=j. 
\end{align*}
Then, since $F_\pi\le |\pi|$, we have that $F_\pi<j$.
Furthermore, note that, if $\lambda_3>2$, we obtain the stronger restriction that $F_\pi<j-1$. 

Next, by considering the penultimate vertical step in the diagram of $\mu$, note that
\begin{align*}
    \max(\N\setminus (S_\mu\cup \{F_\mu\}))&=m_\mu+F_\pi\\
    &<m_\mu+j\\
    &=2m_\mu+1.
\end{align*}
Therefore, the only possible element of $\N\setminus S_\mu$ larger than $2m_\mu$ is $F_\mu$. Furthermore, since all nonzero elements of $S_\mu$ are at least $m_\mu$, the sum of any two such elements must be at least $2m_\mu$. Thus, it remains to show that there are no elements $a,b\in S_\mu$ with $a+b=F_\mu$ and $2m_\mu\in S_\mu$.

Note that, for all $a\in S_\mu$, $a+1\in S_\lambda$. Thus, if $a,b\in S_\mu$, $a+1,b+1\in S_\lambda$. Since $S_\lambda$ is a numerical semigroup, $a+b+2\in S_\lambda$. Since $F_\lambda=F_\mu+2\not\in S_\lambda$, we cannot have $a+b=F_\mu$ for any $a,b\in S_\mu$. 

For the final step, we consider two cases. 
\begin{enumerate}
    \item[Case 1:] Assume that $\lambda_3>2$. In this case, $F_\pi\le |\pi|-1$, so $F_\pi<j-1$. Thus $\max(\N\setminus (S_\mu\cup F_\mu))<2m_\mu$, so $2m_\mu \in S_\mu$. 
    \item[Case 2:] Assume that $\lambda_1>\lambda_2+1$. Note that, if $\lambda_3>2$, we can follow the proof for Case 1, so we only need to consider the case where $\lambda_3=2$. In this case, we note that, if $2m_\mu\not \in S_\mu$, then one of the following statements must be true. 
    \begin{itemize}
        \item There are at least $m_\mu=j-1$ parts of size $2$ in $\lambda$. In this situation, since $|\lambda|=3j$, $\lambda_1<3j-(j-1)-2(j-1)=3$, contradicting the assumption that $\lambda_1>\lambda_2+1$. 
        \item $F_\pi=m_\mu$. In this situation, since $|\lambda|=3j$, $\lambda_1<3j-(j-1)-(j-\lambda_2)-(j-1)=\lambda_2+2$, which again contradicts the assumption that $\lambda_1>\lambda_2+1$. 
    \end{itemize}
\end{enumerate}

\noindent {\bf Proof of 4:} Let $\mu\in \noOnesSet(2j+1)$ such that $F_\mu\ne j+1$. Define $\lambda=\mu\cup\{1^{j-1}\}$. Since $\mu$ has no parts of size $1$, $\lambda$ has exactly $j-1$ parts of size $1$, making $m_{\lambda}=j$. It remains to show that $S_{\lambda}$ is a numerical semigroup. Since $\mu$ has no parts of size $1$ and $\mu_1=\mu_2$, $F_\mu\le \frac{2j+1}{2}+1<j+2$, making $F_{\lambda}<2j+1.$ Furthermore, since $F_\mu\ne j+1$, $F_{\lambda}\ne j+1+(j-1)=2j$. Thus   $F_\lambda<2j=2(m_{\lambda})$, proving that $S_{\tilde\lambda}$ is a numerical semigroup. Therefore, since $|\lambda|=2j+1+(j-1)=3j$, $\lambda$ has exactly $j-1$ parts of size $1$, $\lambda\in \mathcal{M}$ with $\lambda_1=\lambda_2$.  

\noindent {\bf Proof of 5:} Let $\mu\in \noOnesSet(2j+1)$ such that $F_\mu= j+1$. By the definition of $\noOnesSet(2j+1)$, $\mu_3=3$ and $\mu_i=2$ for all $3<i\le k$, with $k$ equal to the length of $\mu$. Define $\lambda:=\{\mu_1+1,\mu_2,\mu_3-1,\mu_4,\ldots,\mu_k,1^{j-1}\}=\{\mu_1+1,\mu_2,2^{k-2},1^{j-1}\}$.  First, note that $|\lambda|=|\mu|+j-1=3j$, $m_\lambda=j$, and $1<\lambda_3<\lambda_2$, so it remains to show that $S_\lambda$ is a numerical semigroup. Note that $F_\lambda=F_\mu+1+j-1=2j+1$. Since $\lambda_1\ne\lambda_2,$ $F_\lambda-1=2j\in S_\lambda$. Thus, the only possible, $a,b\in S_\lambda$ with $a+b\not\in S_\lambda$ are $a=j$ and $b=j+1$. It remains to show that $j+1\not\in S_\lambda$. We will consider two cases based on the number of parts in $\mu$. 
\begin{enumerate}
    \item[Case 1:] Assume that $k=3$. In this case, $\lambda_k=\mu_k-1$. Since $|\mu|=2j+1$, $F_\mu=j+1$, and $\mu_1=\mu_2$, we must have $\mu_1=\mu_2=j-1$. Then, since $|\mu|=\mu_1+\mu_2+\mu_3=2(j-1)+\mu_3$, we know that $\mu_3=3$. Therefore $\lambda_3=2$ and $j+1=m_\lambda+1\notin S_\lambda$. 
    \item[Case 2:] Assume that $k>3$. Since, $F_\mu=j+1$, we have $k-1+\mu_1=j+1$ and $k-2+\mu_1=j$. Since $\mu_k=2<3$,  $j+1=m_\lambda+1\notin S_\lambda$.
\end{enumerate}

\noindent {\bf Proof of 6:} Let $\mu\in\setMult(3j-2,j-1)$. Define $\lambda=\{\mu_1+1,\mu_2,\ldots,\mu_k,1^{j-1}\}$. 

First, note that, because $F_\lambda=F_\mu+2$ and $m_\lambda=m_\mu+1$, $$F_\lambda<2m_\lambda \iff F_\mu<2m_\mu.$$
Thus, we now focus on the case where $F_\mu>2m_\mu$. Define $\pi$ to be the partition obtained by removing the largest hook from $\mu$ (or, equivalently, from $\lambda$). Then 
\begin{align*}
    |\pi|&=|\mu|-F_\mu\\
    &<|\mu|-2m_\mu\\
    &=j.
\end{align*}
Then, since $F_\pi\le |\pi|$, we have that $F_\pi<j$. 

Next, we consider 
\begin{align*}
    \max(\N\setminus(S_\lambda\cup F_\lambda))&=m_\lambda+F_\pi\\
    &<m_\lambda+j\\
    &=2m_\lambda. 
\end{align*}
Therefore, the only possible element of $\N\setminus S_\lambda$ of size at least $2m_\lambda$ is $F_\lambda$. Furthermore, since all nonzero elements of $S_\lambda$ are at least $m_\lambda$, the sum of any two such elements must be at least $2m_\lambda$. We will now show that there are no elements $a,b\in S_\lambda$ with $a+b=F_\lambda$. 

Note that, for all $a\in S_\lambda$, $a-1\in S_\mu$. Thus, if $a,b\in S_\lambda$, $a-1,b-1\in S_\mu$. Since $S_\mu$ is a numerical semigroup, $a+b-2\in S_\mu$. Since $F_\mu=F_\lambda-2\notin S_\mu$, we cannot have $a+b=F_\lambda$ for any $a,b\in S_\lambda$. 

It remains to show that, if $\mu\in \setMult(3j-2,j-1)$, then $\mu_1>\mu_2$ or, $\mu_1=\mu_2$ and $\mu_3\ne 2$. Assume for the sake of contradiction that $\mu_1=\mu_2$ and $\mu_3=2$. We use $k$ to denote the number of parts of $\mu$ that are larger than 1. Then,
\begin{align*}
    |\mu|=2\mu_1+2(k-2)+(j-2).  
\end{align*}
Since $|\mu|=3j-2$, we can see that $$j=\mu_1+k-2.$$
As before, we let $\pi$ be the partition obtained by removing the largest hook from $\mu$. In this case, $F_\pi=\mu_1+k-3=j-1=m_\mu$. Note that $m_\mu+F_\pi\not\in S_\mu$, contradicting the fact that, for $S_\mu$ to be a numerical semigroup, we must have $2m_\mu\in S_\mu.$
 \end{proof}

\section{Conclusion}

Many of the results above involve functions of the form $\countingFuncGenus(L_1(n)-k,L_2(n)-k)$, $\countingFuncMult(L_3(n)-k,L_4(n)-k)$, and $\countingFuncFrob(L_5(n),L_6(n)-k)$ for some specific linear functions $L_i(n)$ and certain values of $k$. As we noted in Remarks \ref{remark:genus}, \ref{remark:Frob} and \ref{remark:mult}, we could extend these results to any specific value of $k$ by explicitly counting exceptional cases. However, it is an open problem to extend the results uniformly to general theorems that hold for any value of $k$. 

 

The solutions to these open problems would involve general forms for the functions  $h(k)$ (described in Remarks \ref{remark:genus} and \ref{remark:mult}) that count exceptional cases. We note that part of the difficulty is that there are often multiple ways of being an `exceptional case', and the functions $h(k)$ may count several different types of exceptions. This is illustrated by the bound given in Remark \ref{remark:Frob} for $\countingFuncFrob(2n,2n-k)$, where we can explicitly identify some exceptional cases, but as $k$ increases, there will be additional exceptions.

\section{Acknowledgements}
This material is based upon work supported by the National Science Foundation under Grant No.~1440140, while the authors were in residence at the Mathematical Sciences Research Institute in Berkeley, California, during the summer of 2022. This work is supported by Korea Institute for Advanced Study (KIAS) grant funded by the Korea government. Hayan Nam was supported by the National Research Foundation of Korea (NRF) grant funded by the Korea government (MSIT) (No. 2021R1F1A1062319).

\printbibliography
\end{document}